\documentclass{amsart}

\usepackage{amssymb,amsmath, enumitem,graphicx,tikz,import,transparent}
\usepackage[colorlinks=false]{hyperref}
\usepackage[font=small]{caption}

\newcommand{\tts}{{\tt s}}
\newcommand{\ttc}{{\tt c}}

\newcommand{\bfS}{{\bf S}}
\newcommand{\p}{{\bf p}}
\newcommand{\q}{{\bf q}}
\newcommand{\z}{\mathbf z}
\newcommand{\f}{\mathbf f}
\newcommand{\TT}{\mathbf  T}

\renewcommand{\v}{{\bf v}}
\renewcommand{\b}{{\bf b}}
\renewcommand{\u}{\mathbf{u}}
\newcommand{\w}{\mathbf{w}}


\newcommand{\Et}{\mathrm{E}_2}
\newcommand{\SEt}{\mathrm {SE}_2}
\newcommand{\SOt}{\mathrm {SO}_3}
\newcommand{\PSLt}{\mathrm {PSL}_2(\R)}
\newcommand{\Isom}{\mathrm{Isom}}
\newcommand{\Aut}{\mathrm{Aut}}

\newcommand{\GL}{\mathrm{GL}}
\newcommand{\diag}{\mathrm{diag}}
\newcommand{\sech}{\,\mathrm{sech}}

\newcommand{\isom}{\mathfrak{isom}}
\newcommand{\set}{\mathfrak{se}_2}

\newcommand{\g}{\mathfrak{g}}


\newcommand{\C}{\mathbb C}
\newcommand{\R}{\mathbb R}
\newcommand{\Z}{\mathbb Z}


\newtheorem{theorem}{Theorem}[section]
\newtheorem{proposition}[theorem]{Proposition}
\newtheorem{corollary}[theorem]{Corollary}
\newtheorem{lemma}[theorem]{Lemma}

\theoremstyle{definition}
\newtheorem{definition}[theorem]{Definition}
\newtheorem{remark}[theorem]{Remark}


\newcommand{\on}{ orthonormal }

\newcommand{\sR}{sub-Riemannian } 

\newcommand{\be}{\begin{equation}}
\newcommand{\ee}{\end{equation}}

\newcommand{\sn}{\smallskip\noindent}
\newcommand{\mn}{\medskip\noindent}

\newcommand{\sm}{\sf\small}

\title{Bicycle paths, elasticae  and  sub-Riemannian geometry}

\author[A. Ardentov]{Andrey  Ardentov}
\address[Ardentov]{Ailamazyan Program Systems Institute, Russian Academy of Sciences, Pereslavl-Zalessky, Russia} 
\email{aaa@pereslavl.ru}
\author[G. Bor] {Gil Bor} 
\address[Bor]{CIMAT, A.P. 402, Guanajuato, Gto. 36000, Mexico} 
\email{gil@cimat.mx}

\author[E. Le Donne]{Enrico Le Donne} 
\address[Le Donne]{Dipartimento di Matematica, Universit\`a di Pisa, Largo B. Pontecorvo 5, 56127 Pisa, Italy \\
\& \\
University of Jyv\"askyl\"a, Department of Mathematics and Statistics, P.O. Box (MaD), FI-40014, Finland}
\email{ledonne@msri.org}

\author[R. Montgomery]{Richard Montgomery}
\address[Montgomery]{Mathematics Department\\ University of California, Santa Cruz\\
Santa Cruz CA 95064}
\email{rmont@ucsc.edu} 

\author[Yu. Sachkov]{Yuri Sachkov}
\address[Sachkov]{Ailamazyan Program Systems Institute, Russian Academy of Sciences, Pereslavl-Zalessky, Russia} 
\email{yusachkov@gmail.com}

\date{\today}

\begin{document}

\maketitle


\begin{abstract} 
 We relate  the   \sR geometry  on the group of rigid motions of the plane to  `bicycling mathematics'. We show that this geometry's   geodesics  correspond to bike paths whose  front tracks   are either  non-inflectional Euler elasticae or straight lines, and that   its infinite minimizing geodesics (or `metric lines')
  correspond to bike paths   whose front tracks are either   straight lines or `Euler's solitons'  (also known as  Syntractrix or Convicts' curves).  
 \end{abstract}

\tableofcontents

\section{Introduction}
An oriented line segment  of fixed length $\ell$ moves  in the Euclidean plane.  We think of the segment as a  bicycle so that  its  end points mark the points of 
contact of the front and back wheels with the ground.  As the segment moves, its end points trace a pair of curves,   the {\em front }and  {\em  back tracks}. 
  We impose  the  `no-skid' condition on the motion: the line segment must  be tangent to the back track at each instant.
  Any such  motion of a line segment  will be  called a {\em bicycle path}. See Figure~\ref{fig:bicycling}. We define the  {\em length} of a bicycle  path    to be the ordinary Euclidean  length of its front track. 


\mn\begin{figure}[ht]
\centering\def\svgwidth{\textwidth}\import{./figures/}{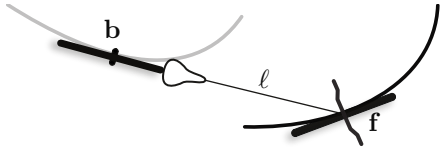}
\caption{ The front and back  tracks of  a bicycle path (the dark  and light curves, respectively).}
\label{fig:bicycling}

\end{figure}

 {\em  What are the minimizing bike paths?} These are bike paths  whose length  minimizes the   length among all  competing bike paths which  connect 
 two given placements  of the line segment.  
 
 We will say that two curves in the plane {\it have the same shape} if one curve can be taken onto the other by
 a homothety, that is, a composition of an isometry and a dilation.   
 The {\it width} of a plane curve is the infimum of the distances between two parallel lines
 which bound a strip containing that curve. 

\begin{theorem}\label{thm:main1} The front track of a  minimizing bicycle path  is a straight line or an arc of a {\em non-inflectional  elastic curve} of  width twice the bicycle length  or less.  Every possible  shape of non-inflectional elastic curve arises in this way.\end{theorem}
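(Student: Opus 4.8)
The plan is to realize bicycle paths as the horizontal curves of a left-invariant sub-Riemannian structure on the group $\SEt$ of orientation-preserving rigid motions of the plane, under which the bicycle length becomes the sub-Riemannian length. I would coordinatize a placement of the segment by the back-wheel position $(x,y)$ together with the heading angle $\theta$, so that the front wheel sits at $\f=(x+\ell\cos\theta,\,y+\ell\sin\theta)$. The no-skid condition forces the admissible velocities to be spanned by $e_1=\cos\theta\,\partial_x+\sin\theta\,\partial_y$ and $e_2=\tfrac1\ell\,\partial_\theta$, and declaring these orthonormal makes the sub-Riemannian length of a path equal to $\int|\dot\f|\,dt$, the Euclidean length of its front track. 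Since $[e_1,e_2]$ is everywhere transverse to the distribution, the structure is contact; hence there are no nonconstant abnormal extremals and every minimizer is a normal Pontryagin geodesic. Thus it suffices to describe these geodesics.

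I would then carry out the Hamiltonian reduction. Writing the optimal controls as $v=\cos\beta$ and $\ell\dot\theta=\sin\beta$, so that $\beta$ is the steering angle, left-invariance makes the translational momentum $\p=(p_x,p_y)$ a first integral; set $R=|\p|$ and orient the axes so that $\cos\beta=-R\sin\theta$. The steering angle then obeys the pendulum equation $\beta''=\tfrac1{2\ell^2}\sin2\beta$. The tangent angle of the front track is $\alpha=\theta+\beta$, and a direct computation yields the key identity
\[
\frac{d\kappa}{ds}=\frac{\cos\beta}{\ell}\,\kappa,
\]
where $\kappa=d\alpha/ds$ is the front-track curvature and $s$ is its arclength. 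As this ODE is linear and homogeneous in $\kappa$, the curvature is either identically zero — giving a straight front track — or nowhere zero; in the latter case $\alpha$ is checked to satisfy the pendulum equation that characterizes Euler elasticae. This already proves that the front track of any minimizer is a straight line or a non-inflectional Euler elastica.

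For the width bound I would use the same integral $\p$. Taking the elastica's axis along $\p$, the transverse coordinate $x_\f$ satisfies, via $\cos\alpha=-\ell\kappa\sin\theta$ and $\sin\theta=-\cos\beta/R$, the identity $\dot x_\f=\tfrac{\ell^2}{R}\,d\kappa/ds$; hence the transverse extent — which is exactly the width, since the curve runs off to infinity along $\p$ — equals $\tfrac{\ell^2}{R}(\kappa_{\max}-\kappa_{\min})$. The extreme curvatures are attained at the instants of pure rotation $v=0$, i.e.\ $\beta=\pm\tfrac\pi2$, which force $\sin\theta=0$; evaluating $\ell\kappa=\sin\beta+R\cos\theta$ there gives $\ell\kappa\in\{1+R,\,1-R\}$ when $R<1$ ($\theta$ circulates) and $\ell\kappa\in\{R+1,\,R-1\}$ when $R>1$ ($\theta$ oscillates). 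Therefore the width equals $2\ell$ for $R\le1$ and $2\ell/R$ for $R\ge1$, so in every case it is at most $2\ell$, the separatrix $R=1$ being the straight line and the soliton.

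Finally, for the surjectivity statement I would observe that the similarity type of a non-inflectional elastica is pinned down by the ratio $\kappa_{\min}/\kappa_{\max}=|1-R|/(1+R)$, which sweeps all of $(0,1)$ as $R$ ranges over $(0,1)$; since a short enough sub-arc of any geodesic is minimizing, each such shape is the front track of a genuine minimizing bicycle path. The step I expect to be the main obstacle is making this description exhaustive and sharp at the two ends: confirming that the contact condition truly rules out abnormal minimizers and that the normal flow is complete (so that no minimizer escapes the elastica picture), and verifying that $\kappa_{\min}/\kappa_{\max}$ is a complete similarity invariant realizing every modulus of the classical elastica family. The two displayed identities and the location of the extremal curvatures are then routine bookkeeping.
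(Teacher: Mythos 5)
Your proposal is correct and follows essentially the same route as the paper: the contact condition rules out nonconstant abnormal minimizers, the normal Hamiltonian system on $\SEt$ yields the key identity $\dot\kappa=(v/\ell)\,\kappa$ (the paper's $\dot\kappa=a\cos\theta\,\kappa$), whence non-inflectionality, the elastica/directrix property via $\dot\kappa\propto\dot y$, the critical curvatures $(1\pm R)/\ell$, the widths $2\ell$ and $2\ell/R$, and surjectivity onto shapes via a scale-invariant ratio (you use $\kappa_{\min}/\kappa_{\max}$ where the paper uses $\mu=-2B/A^2$; each determines the other monotonically). The only cosmetic differences are your use of back-wheel coordinates with the ``straight ahead''/``turn'' frame, which the paper itself introduces in Section~\ref{sec:models}, and your appeal to the pendulum/directrix characterization of elasticae in place of the paper's explicit derivation of the energy-form ODE with $A=-(a^2+1)/2$, $B=-(a^2-1)^2/8$.
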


\begin{figure}[ht]
\centering
\centering\def\svgwidth{\textwidth}\import{./figures/}{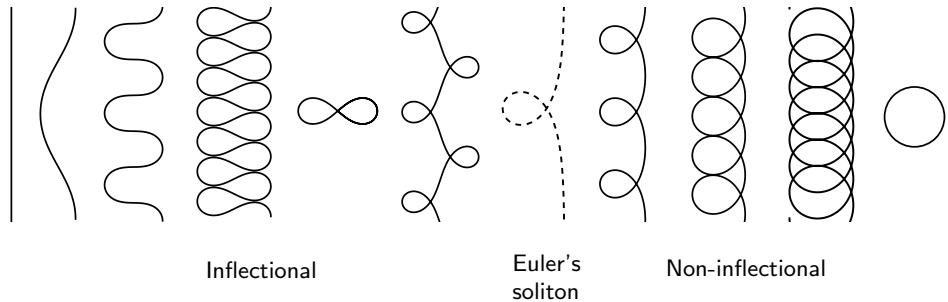}
\caption{ The family of elastic curves. }
\label{fig:elastica-intro}
\end{figure}


See  Figure \ref{fig:elastica-intro} for some examples of elastic curves, also known as {\em elasticae}, a remarkable family of plane curves  studied by Jacques Bernoulli (1691), Euler (1744) and many others.  (We recommend  \cite{Levien} for a  nice historical review.)   Elasticae can be parameterized by  elliptic functions.   They  are the planar curves having  critical total curvature squared, among all  curves with fixed length connecting two given points.  They are  defined by the differential equation \eqref{eq:energy_form}  below. Another 
characterization  of elasticae is  as  curves whose curvature varies linearly with the (signed) distance to some fixed line, the {\it directrix} of the elastica. (Can you see this line for each of the curves in Figure \ref{fig:elastica-intro}?). 
Theorem \ref{thm:main1}  provides yet another characterization of elasticae, apparently new. In  Figure \ref{fig:elastica-intro}  the Euler soliton and all  the curves to its right  are `non-inflectional': they have no  points with null curvature.  
All the elasticae to the left of  the Euler soliton are inflectional.  See Section \ref{sec:elasticae} below for more information on elasticae. 

\sn

In Section \ref{sec:geod} we derive relationships  between the shapes and widths   of the   elasticae of Theorem \ref{thm:main1}.
In general, a bike path  {\em is  not determined} by its front track. That is, for a given front track, there is a circle's worth of corresponding back tracks, each of which determined by the bicycle frame orientation at some fixed  point of the front track. 
However, for each of the minimizing  bike paths of Theorem \ref{thm:main1},  except those whose front track is a line segment, its  front track,  combined with the condition that
the bike path  minimizes,   {\em does determine} the back track. 
For a given  shape of a non-inflectional elastica  there are two distinct    types of minimizing  bike paths: one whose front track has  width $2 \ell$
and another of certain lesser width (depending on the shape).  
 We call them `wide' and `narrow' front paths.  (Exception:  Euler solitons appear only in width $2\ell$.)  The shapes of the back tracks  of these two types  are  quite different. See Figure \ref{fig:geod} and Proposition 
\ref{prop:width} for the full details. 

\sn 

Let us emphasize that Theorem \ref{thm:main1} does {\em not} state that arbitrary subsegments of a given non-inflectional elastica occur as front tracks of minimizing bike paths. In fact, typically, the opposite is true. Consider for example Figure \ref{fig:conj}. It depicts a geodesic bike path connecting two horizontal placements of the bike. Clearly, this is not a minimizing path; a straightforward eastward ride will be much shorter. Theorem \ref{thm:main1} only states  that  short enough subsegments of this path  are  minimizing between their endpoints.
We do not address here how short is `short enough'.  
For comprehensive results in that direction
see \cite{Sachkov1,Sachkov2,Sachkov3}.   
According to our next theorem, the fact that geodesics eventually fail to minimize, as   depicted in Figure \ref{fig:conj}, is  typical, with two  exceptions. 

\begin{figure}[ht]
\centering
\includegraphics[width=.6\linewidth]{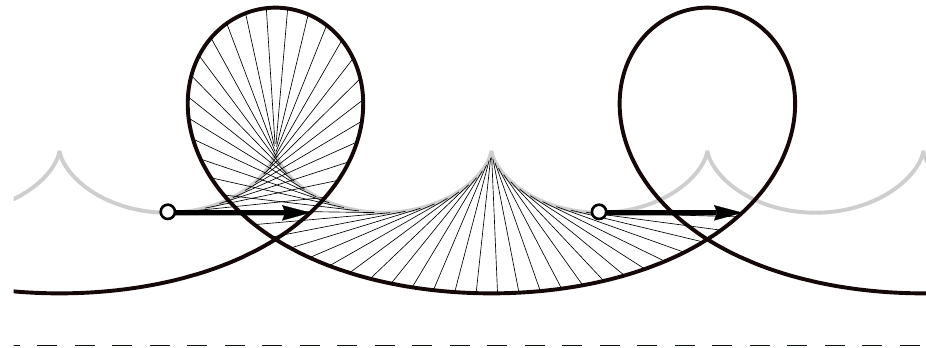} 
\caption{ A non-minimizing geodesic segment.}
\label{fig:conj}
\end{figure}

\begin{theorem}\label{thm:main2}
An infinitely long bike path is a global minimizer, that is, all of its compact  subsegments 
minimize length between their end points,  if and only if it is  one of the following two types:
\begin{enumerate}
\item its front track is a straight line and  its
back track is a  tractrix or a straight line, or
\item its front track is an  Euler soliton
 of width twice the bike length and its back track is a  tractrix. 
  \end{enumerate}
See Figure \ref{fig:kink}.
Furthermore, there is an  isometric involution of   the  bicycle
configuration space which takes  paths of one type to paths of the other,  
provided the back track of the path is a tractrix  and not a line.  See Lemma \ref{flip}.
\end{theorem}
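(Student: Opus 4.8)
The plan is to reduce everything to the sub-Riemannian geodesic problem on $\SEt$ and to make the isometric involution the central proof device. Working in coordinates $(x,y,\theta)$ in which $(x,y)=\gamma$ is the rear contact point and $\theta$ the frame angle, the no-skid condition forces $\dot\gamma=v\,T$ with $T=(\cos\theta,\sin\theta)$, the turning rate is $\omega=\dot\theta$, and the front wheel is $\gamma+\ell T$. Since $\dot T=\omega N$ with $N=(-\sin\theta,\cos\theta)$, the front-track speed is $|\dot\gamma+\ell\dot T|=\sqrt{v^2+\ell^2\omega^2}$, so the bike-path length is exactly the sub-Riemannian length for the metric making $\cos\theta\,\partial_x+\sin\theta\,\partial_y$ and $\ell^{-1}\partial_\theta$ orthonormal. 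By Theorem \ref{thm:main1}, a path all of whose finite subsegments minimize has front track equal to a straight line or to a complete non-inflectional elastica, so the problem becomes: which of these front tracks give paths of infinite cut time, i.e. global minimizers?

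I would first dispose of the straight-line case by an elementary length comparison. The endpoints $g_0,g_1\in\SEt$ of any finite subsegment determine the front-wheel positions $F_0,F_1$, and every admissible path between $g_0$ and $g_1$ has front track a planar curve from $F_0$ to $F_1$, hence length $\ge|F_1-F_0|$, with equality precisely when that front track is the straight segment $[F_0,F_1]$. Thus any path with straight front track automatically minimizes between its endpoints, and this persists on every subsegment; solving the rear pursuit ODE for a straight front track produces exactly the line and the tractrix as possible back tracks. This settles type (1) completely.

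Next I would introduce the involution as a genuine step, not an afterthought. Define $\iota:\SEt\to\SEt$ by $\iota(x,y,\theta)=(x,y,\theta+\pi)$, reversing the frame while fixing the rear contact point. Then $\iota$ preserves the back track and reflects the front wheel through the rear wheel, sending the front track $\gamma+\ell T$ to $\gamma-\ell T$; since $\dot\gamma-\ell\dot T=vT-\ell\omega N$ again has speed $\sqrt{v^2+\ell^2\omega^2}$, $\iota$ is an isometric involution (admissibility is preserved because the reversed frame still lies along the rear tangent line). For a tractrix back track $\gamma$, the defining tangent-length property gives $\gamma+\ell T=$ the asymptotic line, while $\gamma-\ell T$ is the Euler soliton; hence $\iota$ carries the type-(1) path with tractrix back track to the type-(2) path, proving the theorem's final assertion (Lemma \ref{flip}). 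Crucially, because isometries preserve global minimizers, the already-established minimality of type (1) \emph{transfers through $\iota$} to prove type (2) minimizes globally — so the soliton case needs no independent minimization argument. When the back track is a line, both $\gamma\pm\ell T$ are lines and $\iota$ stays within type (1), which explains the proviso.

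Finally, for the converse I must exclude every complete non-inflectional elastica except the soliton. Via the pendulum model for the curvature these split into the periodic orbitlike curves and their separatrix, the Euler soliton (the unique non-periodic non-inflectional elastica, with curvature tending to $0$ at both ends and width $2\ell$). For the orbitlike curves the lift to $\SEt$ is quasi-periodic and returns arbitrarily near itself, which should force a finite cut time and hence loss of global minimality. I expect this to be the main obstacle: rigorously showing the cut time is finite for \emph{every} orbitlike elastica, and that the soliton is exactly the borderline separatrix limit at which the first conjugate/cut time diverges to $+\infty$. For this quantitative step I would invoke the explicit cut-time and cut-locus computations of \cite{Sachkov1}, \cite{Sachkov2}, \cite{Sachkov3}. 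Combining the line case, the $\iota$-transferred soliton case, and this exclusion yields the stated trichotomy.
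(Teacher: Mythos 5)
Your ``if'' direction is essentially the paper's own argument: straight front tracks minimize by the trivial comparison with the Euclidean distance between the two front-wheel positions, and the flip isometry (your $\iota$ is exactly the paper's $\Phi$ written in the $(x_b,y_b,\theta)$ coordinates of Section~\ref{sec:models}) transports a line-front-track minimizer with tractrix back track to the soliton case, so the soliton needs no independent minimization argument. That half is correct and complete, and it is the same mechanism the paper uses.

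The gap is in the ``only if'' direction. Your proposed mechanism --- that an orbitlike elastica lifts to a quasi-periodic curve in $\SEt$ which ``returns arbitrarily near itself'' and that this ``should force a finite cut time'' --- is not an argument: recurrence of a geodesic gives no length comparison and does not by itself imply failure to minimize. You then defer the entire quantitative step to \cite{Sachkov1}, \cite{Sachkov2}, \cite{Sachkov3}. That route is legitimate (the paper's closing remark notes that, granted Theorem~\ref{thm:main1}, the exclusion follows from the pendulum-equation analysis of \cite{Sachkov3}, where infinitely-minimizing geodesics are exactly those with $\theta\equiv\mathrm{const}$ or $\theta$ a homoclinic solution), but as written your proposal contains no proof of the exclusion, only an expectation that one exists. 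The paper instead closes this step with a short self-contained comparison that you should either reproduce or replace by a precise citation: rotate the periodic elastica so its directrix is horizontal; over one curvature period of arclength $T$ the front wheel advances a distance $L$ along the directrix, and $L<T$ strictly since the elastica is not a straight segment. After $N$ periods the geodesic has length $NT$, whereas the competitor that pivots the frame a quarter turn about the stationary back wheel, rides straight a distance $NL$, and pivots back has length $\pi\ell+NL$, which is shorter once $N>\pi\ell/(T-L)$ (see Figure~\ref{fig:shortcut}). This kills every non-inflectional elastica other than the line and the soliton in two lines, with no appeal to cut-locus computations.
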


In the soliton case, at the `highest point' of the soliton curve, that is, at  its  point of maximum curvature,
the bike frame is  oriented perpendicular to the directrix,  pointing away from it.   
 For an explicit parametrization of the  soliton and   tractrix,  see Lemma  \ref{lemma:tractrix} below.  

\begin{figure}[ht]
\centering
\includegraphics[width=.8\linewidth]{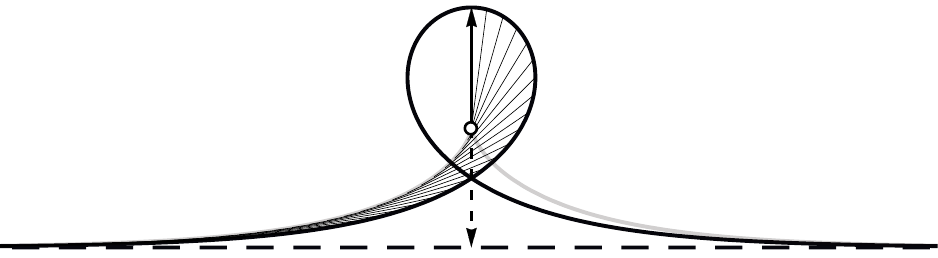} 
\caption{ Two infinite minimizing bike paths share the tractrix (light curve) as a common back track; the two front tracks are a straight line (dashed dark horizontal line) and  an `Euler's  soliton' (solid dark curve).}
\label{fig:kink}
\end{figure}

\sn{\bf About the proofs.} With one notable exception, the proofs of the  two theorems above, once set up in the appropriate language,   reduce  to standard   calculations with the geodesic equations of    \sR geometry. Such a calculation yields Theorem \ref{thm:main1} and `one half' of Theorem \ref{thm:main2}; namely, that all geodesics, except the two types mentioned in Theorem \ref{thm:main2}, are not globally minimizing (the argument  for the last statement is essentially contained in  Figure \ref{fig:conj}). That  bike paths whose front track is  a straight line are global minimizers 
follows directly  from the  definition of bike path length.  What  remains   to show is that geodesics of Theorem \ref{thm:main1} whose front tracks are Euler solitons are global minimizers. Here,
the notable exception mentioned above, we found a surprisingly simple proof, inspired by `bicycle mathematics'. The so called `bicycle transformation' (or  Darboux transformation or B\"acklund transformation
or  flip) consists of rotating a bicycle by $180^0$ about its rear end. It is easy to check that this transformation is an  isometric involution on the bicycle configuration space, so takes global minimizers to global minimizers. Applying it to a (generic) global minimizer whose front track is a line,  we obtain a global minimizer whose front track is an Euler soliton, as depicted in Figure 
\ref{fig:kink}. 

\sn{\bf Comparison with Previous Works.}   
One  of us has published a series of  works  \cite{Sachkov1, Sachkov2, Sachkov3} on the   
geodesics and their minimality (or `optimal synthesis' in the language of control theory) for this same subRiemannian geometry. These earlier works  focused only on the back wheel projection. The front wheel  was not present.    What is new in  our  work is the focus on the front wheel projection and  the realization that 
the front wheel  traces out    elasticae.    We could have derived  our   minimality results by translating the earlier    results  from the back wheel over to the  front wheel
but we have found it simpler and more illuminating to   
directly  study the geodesics from the front wheel point of view.   

Our  other new contributions    are the  subRiemannian involution taking straight line tracks to Euler solitons  (Lemma \ref{flip}, Theorem \ref{thm:isometries}) 
and the relations sketched in subsection \ref{heuristics} 
between the   geodesics here  and those occurring when rolling the  hyperbolic plane along the Euclidean plane 
as investigated by  Jurdjevic \cite{Jurdjevic1, Jurdjevic2}.  

\sn{\bf Computer graphics and animations.} Most  figures  in this article were made using the computer program Mathematica. They are complemented with some  `bicycle mathematics' animations, found on the web page \url{https://www.cimat.mx/~gil/bicycling/}. 

\sn{\bf Acknowledgments.}
AA and YS  were supported by the Russian Science Foundation under grant 17-11-01387-P and performed in Ailamazyan Program Systems Institute of Russian Academy of Sciences.
E.L.D. was partially supported by the Academy of Finland (grant288501 `\emph{Geometry of \sR  groups}' and by grant 322898 `\emph{Sub-Riemannian Geometry via Metric-geometry and Lie-group Theory}') and by the European Research Council (ERC Starting Grant 713998 GeoMeG `\emph{Geometry of Metric Groups}'). GB was supported by CONACYT Grant A1-S-4588.  

\section{Wider Context} 

For a number of surprising theorems around bike paths, and their relations to integrable systems, see
\cite{biking}.  

The bicycling configuration  space  is diffeomorphic to the three-dimensional Lie group $\SEt$ of rigid motions 
of the plane (orientation preserving isometries).  Its length structure comes from a left-invariant \sR metric on this group.  See Section \ref{sec: Config} below for details.  Such a structure is  unique up to scale, \cite{Agrachev-Barilari3D, Bor2},
and that scale can be interpreted as the length of the bicycle frame.  This  structure,   from the perspective of the {\it back wheel}
track, has been investigated by many authors \cite{Citti, Sachkov1, Sachkov2, Sachkov3, Hladky}  and  used  to understand   aspects of mammalian vision. In that latter context
the group $\SEt$ is typically referred to as  the
``roto-translational group'' and the orientation of the bicycle frame is the crucial object, as optical processing in the brain involves cells whose function is to
perceive orientations of line segments.  

Gershkovich and Vershik gave a general description  and classification of left invariant \sR structures on three-dimensional Lie groups in \cite{Gershkovich}, see also  \cite{Agrachev-Barilari3D}.
In all cases the   geodesic equations are those of `generalized elastica'.    

On any metric space we can speak of  `globally minimizing geodesics' or, synonymously, `metric lines': isometric embeddings of the real line into the metric space.  See \cite{Burago}.
 What are the metric lines for a given  \sR structure?   
Theorem \ref{thm:main2} 
 answers this question  for the bicycling case.  
 
 Hakavuouri and LeDonne \cite{LeDonne} 
prove a number of powerful general theorems  regarding  metric lines in \sR geometries by implementing the   operation of
``blowing down'' a geodesic.  Sufficient iterations of  blow-down yield a line in a Euclidean space.
As a corollary, they prove that if a \sR geometry $Q$ comes, like ours, with a \sR submersion $\pi$ to the Euclidean plane $\R^2$,
then (1) the projection of any metric line  in $Q$ must  lie a bounded distance
from a line in the plane, and (2) if that planar  line    is given by $x= 0$ and if we write the projected geodesic  as $(x(t), y(t))$
then $x(t)$ cannot be a non-constant  periodic function.   Item (2)
excludes all the   elasticae of Theorem  \ref{thm:main2}   besides the line and the soliton from being   metric lines.

We know five  other  rank 2 \sR geometries besides our $\SEt$ geometry whose geodesics
project to   elasticae under a \sR 
submersion onto  the Euclidean plane.    (See the third paragraph of Section \ref{sec: Config}  for the definition of a `\sR submersion'.) 
Two   are Carnot geometries, one being the Engel group, whose   growth vector  is $(2,3,4)$ (see \cite{Ardentov1, Ardentov2}),  and
the other, sometimes called  the Cartan group,  being the unique   Carnot group  with growth vector $(2,3,5)$ (see \cite{Sachkov4}).  
(The growth vector of a Carnot group is its basic numerical invariant and 
encapsulates the graded dimensions of its Lie algebra.)
Another  is the  flat Martinet geometry, see \cite{martinet}.
The remaining  two  are five-dimensional, 
arising from  rolling  a   constant curvature
surface along the Euclidean plane, and have   state spaces   $\SOt \times \R^2$ and $\PSLt\times \R^2$. 
See \cite{Jurdjevic1, Jurdjevic2} for a derivation of elasticae as their geodesics. 
  In all five  geometries the geodesics  projecting to 
 Euclidean lines are metric lines and in the   flat Martinet geometry these exhaust the set of  metric lines. 
 In the other four geometries some of  the geodesics which project onto solitons are also metric lines.
 In the  $\SOt \times \R^2$ case  all elasticae, and in particular all Euler solitons,  arise 
 as  projections of geodesics  onto $\R^2$,  but only some of the solitons, namely those whose kink is `small enough',
  arise as projections of  metric lines.

 {\em  Are all these occurrences of elasticae and Euler solitons in \sR geometries  related?}  There is a  \sR submersion from the  Cartan  group onto the  Engel group,  so that the space of   Engel  geodesics embed into the space of  Cartan geodesics by  horizontal lift. Similarly, the space of flat Martinet geodesics embed into the space of Engel geodesics.
   The   bike configuration 
	space $Q = \SEt$
 can be constructed as a circle bundle associated   to  the hyperbolic rolling space $\PSLt \times \R^2$, viewed as
 a principal $\PSLt$ bundle,  and this fact and its related geometry allows us to embed the bike geodesics into the
 hyperbolic rolling geodesics.     See the last paragraph of Section \ref{subsec:bike transport} below.  We leave the 
 possibility  of uncovering   relations between the other pairs of geometries and of
 some deeper reason underlying  the ubiquity of   elasticae in  \sR geometry  to future researchers.

\section{Concepts building to the  proofs}  

\subsection{Elasticae}\label{sec:elasticae}
An immersed   plane curve   is an {\it elastica} if its  curvature $\kappa (t)$,  as a function of arc length $t$,
satisfies the 2nd order  ODE
$$\ddot \kappa+\frac{1}{2} \kappa ^3 + A \kappa =0$$
for some constant $A$.  See, for example, \cite{elnotes}. 
This  is an equation  of Newton's type,  with potential 
$\frac{1}{8} \kappa^4 + \frac{1}{2}A \kappa ^2$. 
Consequently,   there is a constant  `energy' $B \in \R$ such that
\begin{equation} 
\label{eq:energy_form} 
\frac{1}{2} (\dot \kappa) ^2 + \frac{1}{8} \kappa^4 + \frac{A}{2} \kappa ^2 = B.
\end{equation}
We call the latter equation  the `energy form' of the elastica equation.
If $\kappa(t_0) = 0$ at some point then the energy equation asserts that $B \ge 0$.
Consequently, if $B < 0$ we must have that $\kappa$ never vanishes along the curve.
Since $\kappa(t_0) = 0$ corresponds to an inflection point of the curve, we call such
elasticae `non-inflectional.'   
Elasticae for which $B =0, A< 0$ are also non-inflectional and consist of the  Euler solitons. All non-inflectional elasticae, except the Euler solitons,  have periodic curvature.

Equation \eqref{eq:energy_form} can be rewritten (by `completing the square') as 
\be
\label{eq:sq}
\dot\kappa^2+\left({\kappa^2\over 2}+A\right)^2=2B+A^2.
\ee
Thus the parameters must satisfy $2B+A^2\geq 0.$  The set of elasticae is invariant under dilations.  To 
 dilate an immersed plane curve $c(t)$ parameterized by arclength $t$ by a factor $\lambda > 0$  we form  $\tilde c (t) = \lambda c( t /\lambda)$.
 The dilated curve $\tilde c(t)$  is still parameterized by
 arclength and has curvature  $\tilde \kappa (t)  = \frac{1}{\lambda} \kappa (\frac{t}{\lambda})$. It follows
 by a direct computation that the $\lambda$-dilate of an elastica satisfying equation \eqref{eq:energy_form} with  parameters $A,B$
   satisfies a new equation \eqref{eq:energy_form},  now  with rescaled parameters $\tilde A = A/\lambda^2, \tilde B = B/\lambda^4$.  
 Thus 
 \be\label{eq:mu}
 \mu:=-2B/A^2\leq 1
 \ee 
 is scale invariant and can be thought of as a `shape parameter'. Non-inflectional elasticae correspond to $B\leq  0$, that is, $0\leq \mu\leq 1$, in which case  $A\leq  0$ as well. There are 3 types of `exceptional' elasticae, lines, circles and solitons. 
Lines correspond to  solutions of equation \eqref{eq:energy_form} with  $\kappa=B=0$,  solitons  to  $B=0, A<0, \kappa\neq 0$, and  circles to  $\mu=1,$ that is, $B=-A^2, $ $\dot\kappa=0.$ See Figure \ref{fig:param}. 


\begin{figure}[ht]
\centering\def\svgwidth{\textwidth}\import{./figures/}{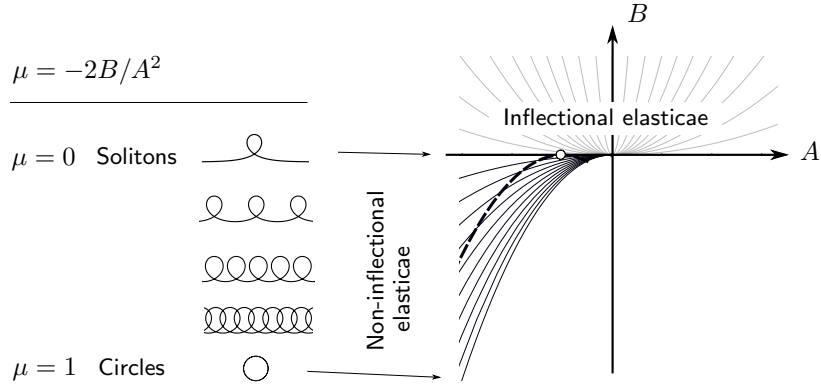}
\caption{ Elasticae parameter space. The light   and dark solid curves parametrize inflectional ($B>0$) and non-inflectional  ($B<0$) elasticae (respectively) of  `constant shape', level curves of the shape parameter $\mu=-2B/A^2$ of equation \eqref{eq:mu}.  The dashed heavy  curve in the third quadrant corresponds to the elasticae which  appear  as front tracks of geodesic bike paths for fixed
bike length $\ell=1$ (see Proposition \ref{prop:kappaeq}, where this curve is parametrized by $a$). Its intersection with the $A$-axis (marked with a white dot) stands for the Euler soliton ($a=1$ in equation \eqref{eq:AB}). Each non-zero level curve of $\mu$ in the third quadrant intersects the dashed curve at 2 points, corresponding to the two sizes of non-inflexional elasiticae appearing as front tracks of bicycle geodesics, `wide' and 'narrow' (to the left and right of the white dot, respectively). }
\label{fig:param}
\end{figure}

\subsection{Configuration  space. Metric concepts.}

\label{sec: Config} 

We begin by  reformulating our  theorems in the language of  \sR and metric geometry. 

Let  $\ell>0$ be the bicycle length. Then the configuration space for bike motions  can be expressed as   $Q=\{(\b,\f)\in\R^2 \times \R^2\,|\, \|\f-\b\|=\ell\} \subset \R^2 \times \R^2$, where $\|\cdot\|$ is the standard Euclidean norm on $\R^2$. It is easy to see that $Q$ is  a smooth manifold diffeomorphic to $\R^2 \times S^1$.
 The no-skid condition defines  a rank 2 distribution $D\subset TQ$ on $Q$ by saying that 
a vector $(\dot\b,\dot\f)\in T_{(\b,\f)}Q$ belongs to $D_{(\b,\f)}$ if and only if $\dot\b$ is a multiple of  $\f-\b$. {\it Bike  paths are    the  integral curves of} $D$.  

$D$ is a  {\em contact distribution}. We prove this in    Lemma \ref{lemma:contact} below.
Alternatively,   in Section \ref{sec:models}   we show how to identify 
$Q$  with the  space of (oriented) tangent lines to the plane, also known as ``contact elements'' since they represent 1st order contact
of curves.  In this context   $D$ is  the canonical contact distribution on this space of contact elements, 
  one of the  first   examples
of a contact manifold.  See for example Appendix 4 of  Arnol'd's famous   book,  \cite{Arnold}.

  Let $\pi_f:Q\to\R^2$ be the front  wheel projection, $(\b,\f)\mapsto\f$.  $D$ is transverse to the fibers of $\pi_f$, hence one can equip $D$ with an inner product   by pulling back the Euclidean metric on $\R^2$ to $Q$ by  $\pi_f$, then restricting  to $D$. The 3-manifold $Q$, together with the distribution $D$ and the  inner product on it, is an example of  a {\em \sR manifold.}  
We   constructed the inner product on $D$ in such a way that  the front wheel projection is a {\it \sR submersion}: 
  for each $\q \in Q$ the differential $d \pi_f (\q)$ maps the  2-plane $D_\q$  isometrically onto $T_{\pi_f (\q)} \R^2 = \R^2$.
	This \sR structure is isometric to the standard \sR structure on the group $\SEt$ studied in  \cite{Citti,  Hladky, Sachkov1, Sachkov2, Sachkov3}. Up to scaling and isometries, it is the unique left-invariant \sR structure on $\SEt$ of contact type.

  Since $Q$ is connected and $D$ is  contact,  the  Chow-Rashevskii  Theorem \cite{tour}   implies that any two points in $Q$ are connected  by  a bicycle path. The length of such a path is defined using the inner product on $D$, just as in Riemannian geometry. 
   In view   of our construction  of the inner product, the length of a bike path equals the   length of its front wheel projection to $\R^2$, as asserted in the introduction. 

%
Defining the distance between two points of $Q$ to be  the infimum of the  lengths of the    bike  paths connecting them turns $Q$ into a metric space. A
 {\em minimizing geodesic} in $Q$ is a bike  path $\gamma:I\to Q$, where $I\subset \R$ is a compact interval, realizing  the  distance between its end points. A {\em geodesic} is  a   bike  path $\gamma: I \to Q$, where $I\subset \R$ is an  interval (possibly non-compact), such that  every   $t_0\in I$  is  contained  in a compact subinterval $I'\subset I$ for which  $\left.\gamma\right|_{I'}$ is a   minimizing geodesic. In addition, we require that if 
 $t_0$ is an interior point of $I$ then $t_0$ is also   an interior point of $I'$. (This last condition excludes arbitrary concatenations of 
 minimizing geodesics from being geodesics.)

Theorem \ref{thm:main1} states that the $\pi_f$-image  of any geodesic  is a non-inflectional  elastic curve or a straight line.  
A {\em metric line} in $Q$ is an infinite geodesic  all of whose compact subsegments are minimizing geodesics. 
Theorem \ref{thm:main2} states that the $\pi_f$-image  of any metric line   is either a Euclidean line or an Euler soliton. (The `width' of this
soliton is twice the length of the bike frame.)  

A {\em \sR isometry} of $Q$  is a diffeomorphism that preserves $D$ and the inner product on it. 

\begin{remark}
Clearly, a \sR isometry is  a distance preserving homeomorphism. 
The latter can be taken as a weaker `metric' definition of isometry. For a general \sR manifold, the equivalence of the two definitions is  an open problem. For an {\em equi-regular} \sR structure, such as our case (or any homogeneous \sR manifold), the two notions are equivalent \cite{Capogna_LeDonne, LeDonne_Ottazzi}.
\end{remark}

By construction, the action of the group $\Et$ of isometries of the plane $\R^2$ lifts to  an action on $Q$   by \sR isometries. An element  $g \in \Et$
acts on $Q$ sending $({\bf b} , {\bf f})$ to $(g{\bf b} , g{\bf f})$ so that our   \sR submersion  $\pi_f$ intertwines the $\Et$-action  on $Q$  
with the standard action of $\Et$ on $\R^2$.    But these are not all the \sR isometries of $Q$. There is one extra symmetry that plays an important role in our proof of Theorem \ref{thm:main2}.

\begin{lemma} \label{flip}  The map $\Phi:Q\to Q$, $ ({\bf b}, {\bf f})\mapsto ({\bf b}, 2{\bf b}  - {\bf f})$, which  `flips' the bike frame about the back wheel 
is a  \sR isometry of $Q$. See Figure \ref{fig:Phi}. 
\end{lemma}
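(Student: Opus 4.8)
The plan is to check three things in turn: that $\Phi$ is a well-defined diffeomorphism of $Q$, that its differential $d\Phi$ carries the distribution $D$ into itself, and that $d\Phi$ acts isometrically on $D$ with respect to the sub-Riemannian inner product. The first point is immediate: writing $\f':=2\b-\f$ for the flipped front, one has $\f'-\b=-(\f-\b)$, so $\|\f'-\b\|=\|\f-\b\|=\ell$ and $\Phi$ maps $Q$ into $Q$. Moreover $\Phi\circ\Phi=\mathrm{id}$, so $\Phi$ is a smooth involution and hence a diffeomorphism.

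For the second point I would differentiate $\Phi$ to get $d\Phi(\dot\b,\dot\f)=(\dot\b,\,2\dot\b-\dot\f)$. A tangent vector lies in $D_{(\b,\f)}$ exactly when $\dot\b$ is a scalar multiple of the frame vector $\f-\b$. Since $\Phi$ leaves $\dot\b$ unchanged and merely reverses the frame to $\f'-\b=-(\f-\b)$, the no-skid condition that $\dot\b$ be parallel to the frame is manifestly preserved, so $d\Phi$ maps $D$ into $D$. (One should also confirm that $d\Phi(\v)$ is genuinely tangent to $Q$ at $\Phi(\q)$, which follows by differentiating $\|\f-\b\|^2=\ell^2$ and noting that both $\f'-\b$ and $\dot{\f'}-\dot\b=-(\dot\f-\dot\b)$ are negated.)

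The heart of the matter is the third point, and this is where I expect the only real work. By construction the sub-Riemannian norm of a horizontal vector equals the Euclidean speed of its front-wheel projection, so for $\v=(\dot\b,\dot\f)\in D$ we have $\|\v\|_{\SR}=\|\dot\f\|$, whereas $\|d\Phi(\v)\|_{\SR}=\|2\dot\b-\dot\f\|$ because the relevant front projection after applying $\Phi$ is $\pi_f\circ\Phi$, which returns the flipped front. Thus the lemma reduces to the identity $\|2\dot\b-\dot\f\|=\|\dot\f\|$ for horizontal $\v$. The cleanest route is to write the frame as $\f-\b=\ell(\cos\theta,\sin\theta)$ and use the no-skid condition to decompose the velocity as $\dot\b=s\,(\f-\b)$ and $\dot\f=s\,(\f-\b)+\ell\dot\theta\,(-\sin\theta,\cos\theta)$; that is, $\dot\f$ splits into a component along the frame and a rotational component orthogonal to it. The flip sends $\dot\f$ to $2\dot\b-\dot\f=s\,(\f-\b)-\ell\dot\theta\,(-\sin\theta,\cos\theta)$, which negates only the orthogonal rotational component while fixing the along-frame component; since these components are orthogonal, the Euclidean norm is unchanged. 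Equivalently one can verify the identity directly from $\dot\b=s(\f-\b)$ together with the relation $(\f-\b)\cdot(\dot\f-\dot\b)=0$. Polarizing the resulting norm equality shows that $d\Phi|_D$ is a linear isometry at every point, which completes the proof that $\Phi$ is a global sub-Riemannian isometry. The main subtlety to keep straight throughout is simply that $\Phi$ moves the front wheel, so one must project by $\pi_f\circ\Phi$ rather than $\pi_f$; once this is recognized the computation is short.
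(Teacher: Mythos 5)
Your proposal is correct and follows essentially the same route as the paper: differentiate the linear map, observe that the no-skid condition is preserved, and then show that $2\dot\b-\dot\f$ is the reflection of $\dot\f$ about the frame direction (your explicit $\theta$-parametrization of the parallel/perpendicular split is just a coordinate version of the paper's orthogonal decomposition $\dot\f=\dot\f_\|+\dot\f_\perp$ with $\dot\f_\|=\dot\b$). No gaps.
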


\begin{figure}[ht]
\centering
\centering\def\svgwidth{\textwidth}\import{./figures/}{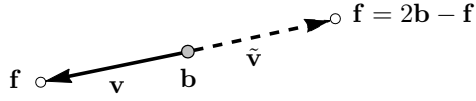}
\caption{ Lemma \ref{flip}: Flipping a bike about its back wheel.}
\label{fig:Phi}
\end{figure}

\begin{proof} $\Phi$ is the restriction of a linear map to $Q\subset\R^2\times\R^2$. Thus its derivative is given by the same formula,  $(\dot\b,\dot\f)\mapsto(\dot\b,2\dot\b-\dot\f).$ It clearly preserves the no-skid condition hence it leaves $D$ invariant. It remains to show that $\|\dot\f\|=\|2\dot\b-\dot\f\|.$ 
Now decompose orthogonally $\dot\f=\dot\f_\|+\dot\f_\perp$, $\dot\b=\dot\b_\|+\dot\b_\perp$, where $\dot\f_\|, \dot\b_\|,$ are the orthogonal projections  along $\b-\f.$ The bicycling no-skid condition implies  $\dot\b_\perp=0$ and   $\|\b-\f\|=\mathrm{const}$ implies  $\dot\f_\|=\dot\b_\|$,  hence $\dot\f_\|=\dot\b.$ Thus $2\dot\b-\dot\f=2\dot\f_\|-(\dot\f_\|+\dot\f_\perp)=\dot\f_\|-\dot\f_\perp.$ That is, $2\dot\b-\dot\f$ is the reflection of $\dot\f$ about $\b-\f$. It follows that $\|2\dot\b-\dot\f\|=\|\dot\f\|.$
\end{proof}

For completeness we 
  describe the full group of isometries of  $Q$.

\begin{theorem}  
\label{thm:isometries}The   group $\Isom(Q)$ of all  \sR isometries of $Q$ is an  extension of $\Et$ by the two-element group $\Z/2\mathbb Z$.   This two-element group is 
generated   by the isometric involution  $\Phi$ which `flips the bike frame',   as described in  Lemma \ref{flip} above.  
Thus $$\Isom(Q)  \simeq \Et\rtimes \Z/2\Z \simeq  \SEt\rtimes (\Z/2\Z\times \Z/2\Z) .$$
The identity component of $\Isom(Q)$ is $\SEt$, acting freely and transitively on  $Q$ and  so 
induces  a \sR isometry between   $Q$ and a left-invariant \sR metric on $\SEt$.
\end{theorem}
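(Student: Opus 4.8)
The plan is to realize $Q$ as the group $\SEt$ carrying a left-invariant \sR structure, reduce the determination of $\Isom(Q)$ to that of the isotropy group of a single point, and compute the latter using the \emph{homogeneous geodesics} through that point. First I would verify freeness and transitivity of the $\SEt$-action: identifying $Q$ with the space of contact elements $(\b,\u)$, $\u=(\f-\b)/\ell\in S^1$, the subgroup $\SEt=\R^2\rtimes\mathrm{SO}(2)\subset\Et$ acts by $(\b,\u)\mapsto(A\b+\tau,A\u)$, which is visibly simply transitive on $\R^2\times S^1$; being a restriction of the $\Et$-action it preserves $D$ and its inner product, so $\SEt$ acts isometrically and $Q\cong\SEt$ with a left-invariant \sR structure. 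In coordinates $(\b,\theta)$, $\u=(\cos\theta,\sin\theta)$, an orthonormal frame of $D$ is $e_1=\cos\theta\,\partial_x+\sin\theta\,\partial_y$ (``drive'') and $e_2=\ell^{-1}\partial_\theta$ (``steer''), and with $e_3:=[e_1,e_2]$ one has $[e_1,e_3]=0$, $[e_2,e_3]=\ell^{-2}e_1$, so $\g=\set$. Along the way I record two further isometries: the orientation-reversing planar reflections $\sigma\in\Et$, and the flip $\Phi$ of Lemma \ref{flip}. A one-line computation shows $\Phi$ commutes with the whole $\Et$-action — for an affine isometry $g\v=A\v+\tau$ one has $g(2\b-\f)=2g\b-g\f$ — so $\Phi$ is central in $\Isom(Q)$; this is what will convert the abstract extension into the stated (semi)direct products.

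Since \sR isometries are smooth and preserve $(D,\langle\cdot,\cdot\rangle)$, and $\Isom(Q)$ is a Lie group acting properly (see \cite{tour, Agrachev-Barilari3D}), and since $\SEt$ already acts simply transitively, every isometry factors uniquely as $g\cdot h$ with $g\in\SEt$ and $h\in H:=\mathrm{Stab}(q_0)$ for the basepoint $q_0=e$. Thus $\Isom(Q)=\SEt\rtimes H$ once $\SEt$ is seen to be normal, and everything reduces to computing $H$. An isometry fixing $q_0$ is determined by its $1$-jet there — it intertwines the \sR exponential map at $q_0$ with its own differential — so the homomorphism $\rho\colon H\to \mathrm O(D_{q_0})\cong\mathrm O(2)$, $\phi\mapsto d\phi_{q_0}|_{D}$, is injective.

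The main obstacle is to rule out a continuous isotropy: the pointwise nilpotentization of any contact structure is the isotropic Heisenberg group, whose isotropy is all of $\mathrm O(2)$, so the anisotropy of $\SEt$ must be extracted from non-infinitesimal information. For this I would compare two homogeneous geodesics through $q_0$: the pure-steer curve $\exp(te_2)q_0$ (the bike spinning in place) is a \emph{closed} geodesic, its front track a circle, whereas the pure-drive curve $\exp(te_1)q_0$ (the straight ride) is a metric line by Theorem \ref{thm:main2}(1), in particular not closed. If $H$ had a positive-dimensional identity component, then by injectivity of $\rho$ its image would be $\mathrm{SO}(2)$, sweeping $e_2$ continuously onto $e_1$; but $\phi$ sends the geodesic with initial velocity $v$ to the one with initial velocity $\rho(\phi)v$ and preserves closedness, so at the parameter where $\rho(\phi)e_2=e_1$ the closed steer-geodesic would map to the non-closed drive-geodesic — a contradiction. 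Hence $H$ is discrete, so $\SEt$ is the identity component of $\Isom(Q)$, and is therefore normal.

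Finally I would pin down $H$. A routine analysis of the geodesic equations (the Hamiltonian flow of $\tfrac12(h_1^2+h_2^2)$ in the coframe dual to $e_1,e_2,e_3$, with $\{h_1,h_2\}=h_3$, $\{h_2,h_3\}=\ell^{-2}h_1$, $\{h_1,h_3\}=0$) shows that constancy of the controls forces $h_3=0$ and $h_1h_2=0$; thus the only unit horizontal directions tangent to one-parameter-subgroup geodesics are $\pm e_1$ and $\pm e_2$. Being the identity component, $\SEt$ is normalized by every isometry, so each $\phi\in H$ carries orbits of one-parameter subgroups to such orbits and geodesics to geodesics; hence $\rho(\phi)$ permutes the four directions $\{\pm e_1,\pm e_2\}$, and since it preserves the closed-versus-metric-line dichotomy it fixes $\{\pm e_1\}$ and $\{\pm e_2\}$ setwise. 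Therefore $\rho(H)\subseteq\{\mathrm{diag}(\pm1,\pm1)\}\cong\Z/2\Z\times\Z/2\Z$. Conversely the reflection $\sigma$ (fixing the drive, reversing the steer) realizes $\mathrm{diag}(1,-1)$, the composite $R_\pi\Phi$ (reversing the drive, fixing the steer) realizes $\mathrm{diag}(-1,1)$, and their product realizes $\mathrm{diag}(-1,-1)$, so all four occur and $H\cong\Z/2\Z\times\Z/2\Z$. Assembling, with $\sigma$ giving $\Et=\SEt\rtimes\langle\sigma\rangle$ and the central $\Phi$ giving the remaining factor, yields $\Isom(Q)=\Et\rtimes\langle\Phi\rangle\cong\Et\rtimes\Z/2\Z\cong\SEt\rtimes(\Z/2\Z\times\Z/2\Z)$, with identity component $\SEt$ acting freely and transitively.
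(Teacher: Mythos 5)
Your overall architecture (reduce to the isotropy group $H$ of a basepoint, show $H$ is discrete so that $\SEt$ is the identity component, then pin down $H\simeq\Z/2\Z\times\Z/2\Z$) matches the paper's, and your determination of the discrete part via homogeneous geodesics is a genuinely different and attractive route: where the paper computes $\Aut(\set,D_e)$ algebraically (using that the span of $X_1,X_3$ is the unique $2$-dimensional abelian ideal), you use the closed-circle-versus-metric-line dichotomy among the one-parameter-subgroup geodesics, and your observation that $\Phi$ commutes with the entire $\Et$-action is correct and actually sharpens the extension to a direct product. However, there is a genuine gap at the crucial step, namely ruling out a continuous isotropy. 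Your argument reads: ``$\phi$ sends the geodesic with initial velocity $v$ to the one with initial velocity $\rho(\phi)v$,'' so a rotation carrying $e_2$ to $e_1$ would carry the closed steer-geodesic to the non-closed drive-geodesic. But in \sR geometry a normal geodesic is \emph{not} determined by its initial velocity: through $\q_0$ with initial velocity $e_1$ there is a one-parameter family of normal geodesics, indexed by the vertical component of the initial covector ($p_\theta$ in the notation of Section \ref{sec:bge}), and only one member of that family is the straight ride. All your hypothetical isometry gives you is that the image of the fiber circle is \emph{some} closed geodesic with initial velocity $R_\alpha e_2$; to reach a contradiction you must additionally show that no closed geodesic issues from $\q_0$ in a direction transverse to $e_2$. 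That is true, but it requires the classification of geodesics (Proposition \ref{prop:kappaeq}, that front tracks are non-inflectional elasticae or lines), the fact that the only closed such front track is the circle of radius $\ell$ (non-circular non-inflectional elasticae advance along their directrix and never close), and that the only horizontal lift of that circle which closes up in $Q$ is the one with the back wheel at the center. None of this is supplied, and it is precisely the work that the paper's appeal to Cartan's equivalence method (non-flatness implies $\dim\Isom(Q)\leq 3$) is doing. The same conflation is harmless in your final paragraph, since there you only compare \emph{homogeneous} geodesics, which genuinely are determined by their initial direction.

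Two smaller points. First, you assert that $\Isom(Q)$ is a Lie group acting properly by smooth maps and that an isometry fixing $\q_0$ is determined by its $1$-jet there ``via the \sR exponential map''; smoothness of \sR isometries is a theorem (Capogna--Le Donne), and the $1$-jet rigidity cannot be deduced from the \sR exponential map as in the Riemannian case (it is based at the cotangent fiber and is not a diffeomorphism near the origin) --- the paper instead invokes the canonical Riemannian metric attached to a contact \sR structure. These are citable, but as written they are gaps of rigor rather than of substance. Second, once the discreteness step is repaired (either by your closed-geodesic argument properly completed, or by importing the paper's curvature/non-flatness computation), the rest of your proof is correct and assembles the group exactly as claimed.
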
 

We prove this  theorem  in the appendix. 
 %
 %
Hladky \cite{Hladky2},  in his final section,  computes that  the Lie algebra of $\Isom(Q)$   is that of  $\SEt$.   The same conclusion can be  drawn from the   asphericity  of the associated CR structure, as in \cite[\S7]{Bor2}. But calculating the Lie algebra of  $\Isom(Q)$  only describes the identity component of  $\Isom(Q)$, missing the  `discrete part' 
 (or `isotropy representation'') of the isometry group, as we do in the appendix.

\section{The proof of Theorem  \ref{thm:main1} (and some more)}
\label{sec:geod}
We prove a more detailed version of  Theorem \ref{thm:main1}, subdividing the assertions into 4  claims. 
Most of these claims do not hold for  the `exceptional' elasticae (line, circle, soliton). We   first describe the non-exceptional  situation, then correct  for the  exceptional elasticae. 
 
\mn{\bf Claim 1}  (Theorem \ref{thm:main1}). The front track of each bicycle geodesic is a NIE (non inflectional elastica) or a straight line. 
  
\mn{\bf Claim 2} (Wide and narrow). For a  fixed bicycle frame of length $\ell$, each  shape of NIE appears as a front track in two different sizes, `wide' and `narrow': The wide front tracks are NIE of  width  $2\ell$. 
 A narrow front track  can have any width in  $(0,2\ell)$, depending on its shape: the more circular is  a narrow front track, the narrower it is. See Figure \ref{fig:geod}. 

 Exceptions:  the circle and the soliton appear as front tracks only with width $2\ell$. 
 \begin{figure}[ht]
\centering
\includegraphics[width=\linewidth]{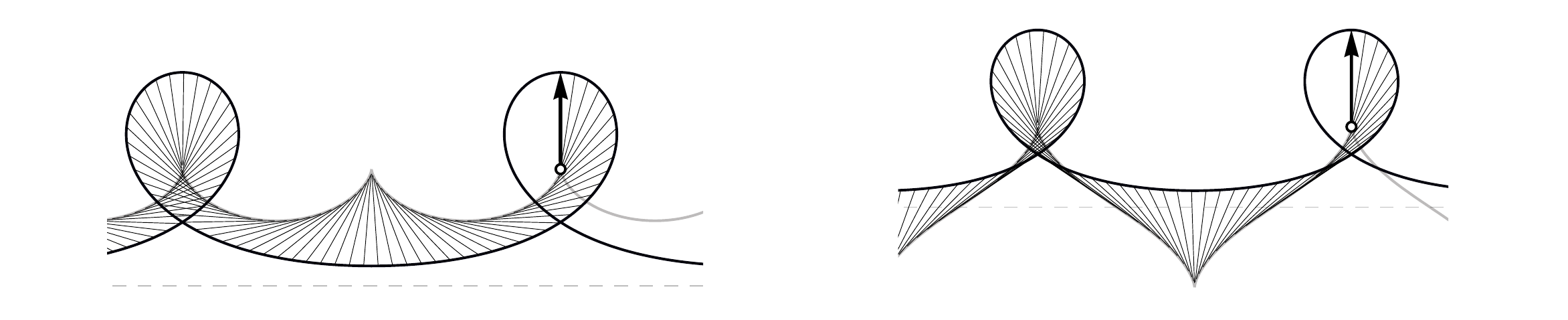} 
\caption{ Two bicycling geodesics with front tracks (dark solid curve) which are non-inflectional elasticae of the same shape but of different size: `wide' (left) and `narrow' (right).  In each of the two figures: the horizontal dashed  light line is the directrix, the light solid curve is the back track,  the  arrow depicts the bicycle frame, pointing towards the front wheel, at the moment of going through a point of maximum curvature of the front track. See Proposition \ref{prop:frame}.}
\label{fig:geod}
\end{figure}

\mn{\bf Claim 3} (Unique horizontal lift). Each non-linear NIE front track, wide or narrow, has a unique horizontal lift to a bicycle geodesic. 
%
%
This lift is determined by the
back track  found by the  following rule: at  points of maximum curvature of the front track the bicycle frame is perpendicular to the front track,  pointing `outside' the front track (that is, in the direction opposite to the acceleration vector of the front track).
 See Figure \ref{fig:geod} and our web animations \cite{anim}.

The bicycle frame is also perpendicular to the front track at the points of minimum curvature. For the wide NIE, the frame at this point also points outside the front track.
For the narrow NIE the frame  points inside.

Exception: all  horizontal lifts of a Euclidean line are  globally minimizing bike paths. Two of the lifts correspond to riding along the line, either forward or backwards,  with the bike frame aligned with the line.  The rest of the lifts correspond to the back wheel tracing a {\em tractrix} of width $\ell$ (the light solid curve of Figure \ref{fig:kink}). 

\mn{\bf Claim 4} (Flipping a front track). There is a sub-Riemannian  involution $\Phi:Q\to Q$ on the bicycling configuration space, rotating
the bicycle frame by $180^0$ about its rear end. It acts on the space of bicycle geodesics, as well as their front tracks, preserving the `narrow' and `wide' subclasses. Each NIE has its `length' $L$: the distance between two successive points along the curve of maximum (or minimum)   curvature, see Figure \ref{fig:shortcut}. The flip of a  wide NIE is obtained by translating it by $L/2$ along its directrix. The flip of a narrow NIE is obtained by a `glide reflection': translation by $L/2$ along the directrix followed by a reflection about it. See Figure~\ref{fig:nie}. 

Exceptions: the flip of the circle is the circle itself, the flip of the line is the soliton, of width $2\ell$, and vice versa.

\begin{figure}
\centering
\includegraphics[width=\linewidth]{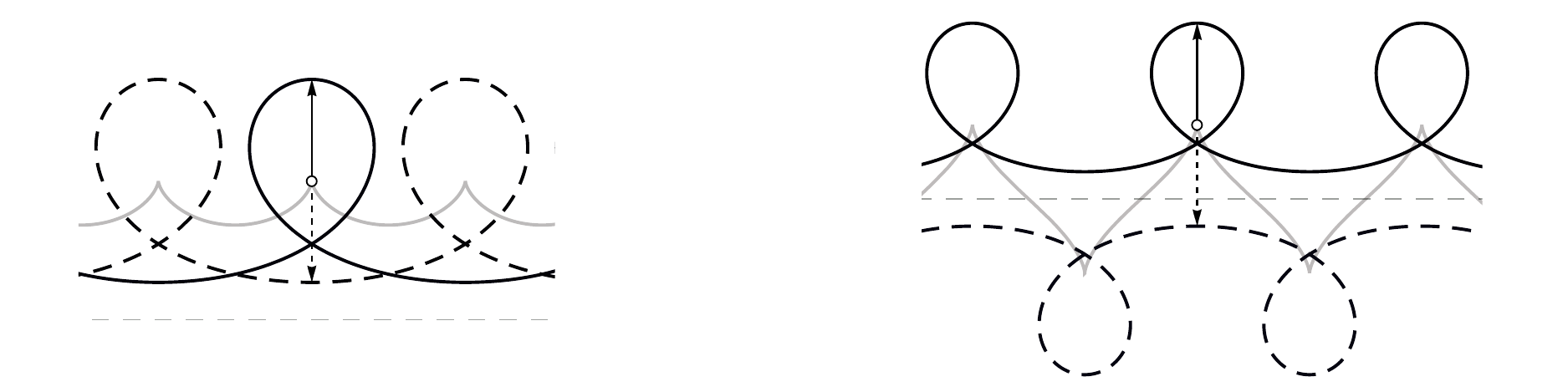} 
\caption{ The flips of bicycle geodesics with `wide' (left) and `narrow' (right)  front tracks of the same shape. In each of the two figures: the  dashed dark curve is the `flip' of the solid dark curve and vice versa, the light  solid curve is their common  back track, the light dashed horizontal  line is their common directrix, the solid  arrow indicates the bicycle frame at a point of maximum curvature of the solid front track and the dashed  arrow indicates the bicycle frame  at a point of minimum curvature of  the dashed  front track. See Proposition \ref{prop:flip}.}\label{fig:nie}
\end{figure}

\subsection{Generalities on geodesics in \sR geometry}
To prove the above 4 claims we  review some general facts from \sR geometry.  For more  details see Chapter 1 of   \cite{tour}.

 Let $M$ be a smooth manifold.  We can turn a smooth vector field $X$ on $M$   into a fiber-linear function $P_X:  T^* M \to \R$ by the
rule $P_X (\q, \p) = \p (X(\q))$, where $\q\in M$ and $\p \in T^* _\q M$.
Consider a  general rank $r$ distribution $D\subset TM$, equipped with a  \sR metric on $D$ and an  \on frame $X_1, \ldots, X_r\in\Gamma(D)$.
Form the corresponding fiber-linear functions $P_i := P_{X_i}$.  Then  the {\em normal geodesics}
of this \sR structure are, by definition,  the projections onto $M$ of the solutions to the standard Hamiltonian equation on $T^*M$, 
\begin{equation}\label{eq:normal}\dot\q=\partial_\p H,\ \dot\p=-\partial_\p H, \ \mbox{ where } H = \frac{1}{2} \sum_i (P_i) ^2 . 
\end{equation}
See Theorem 1.14 on page 9 of \cite{tour} for the full statement and later, a proof.

Normal geodesics  parametrized by arc length correspond to solutions of equation \eqref{eq:normal} with energy $H =1/2$. Short enough segments of normal geodesics are length minimizers, but the converse is not true, in general, due to the existence of {\em singular} (or {\em abnormal}) geodesics. See  \cite{tour}, particularly Chapters 3 and 5.  However, a basic result of the theory is:  {\em if $D$ is a {\em contact} distribution then all length minimizing  $D$-horizontal curves are  normal geodesics}.  See the example at the top of page 59 in \cite{tour}. 

\subsection{The bicycling geodesic equations} \label{sec:bge}

Let $Q=\{(\b, \f)\in \R^2\times\R^2\,|\, \|\b-\f\|=1\}$ be the bicycling configuration space, equipped with  the coordinates $(x,y,\theta)$, where $\f=(x,y),$ $\b=\f- (\cos\theta, \sin\theta)$, with 
associated  global coordinate vector field  framing $\partial_x, \partial_y, \partial_\theta$.  (We take,  without loss of generality  the  bike length $\ell=1$. The general case reduces to  this case by an easy rescaling argument.)  The conjugate  fiber coordinates on $T^*Q$ are 
$p_x:=P_{\partial_x}$, $p_y:=P_{\partial_y}$,  $p_\theta:=P_{\partial_\theta}$.

 \begin{lemma} \label{lemma:contact}
 The no-skid condition defines on $Q$ a rank 2 distribution $D\subset TQ$, the kernel of the 1-form 
\be \eta:= d\theta-\ttc d y+\tts d x,  \ \mbox{ where } \ttc=\cos\theta, \ \tts=\sin\theta.
\ee 
It follows that $\eta\wedge d\eta=-dx\wedge dy\wedge d\theta$ is non-vanishing, hence $D$ is a contact distribution. 
\end{lemma}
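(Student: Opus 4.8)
The plan is to verify the lemma by a direct coordinate computation: first show that the no-skid condition on a tangent vector is exactly the vanishing of $\eta$, and then compute $\eta\wedge d\eta$ by hand to exhibit it as a nonzero multiple of the volume form.

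First I would write the back-wheel position in the coordinates $(x,y,\theta)$. Since $\f=(x,y)$ and $\f-\b=(\ttc,\tts)$, we have $\b=(x-\ttc,\,y-\tts)$, so differentiating along a curve and using $\tfrac{d}{dt}\ttc=-\tts\,\dot\theta$, $\tfrac{d}{dt}\tts=\ttc\,\dot\theta$ gives $\dot\b=(\dot x+\tts\,\dot\theta,\ \dot y-\ttc\,\dot\theta)$. The no-skid condition says $\dot\b$ is a multiple of $\f-\b=(\ttc,\tts)$, equivalently that $\dot\b$ is orthogonal to $(-\tts,\ttc)$. Forming this inner product and collapsing with $\ttc^2+\tts^2=1$, I expect to land exactly on $\dot\theta-\ttc\,\dot y+\tts\,\dot x=0$, that is $\eta(\dot\gamma)=0$. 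Since $\eta$ is nowhere zero, its kernel is a rank $2$ distribution, and this computation identifies it with the no-skid distribution $D$.

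The second step is the contact computation. I would first compute $d\eta=\tts\,d\theta\wedge dy+\ttc\,d\theta\wedge dx$ (the $d\theta$ summand of $\eta$ is closed), and then wedge with $\eta$. Only the monomials containing all three of $dx,dy,d\theta$ survive; reordering each surviving term to the standard form $dx\wedge dy\wedge d\theta$ and tracking the signs, the coefficients should combine through $\ttc^2+\tts^2=1$ to yield $\eta\wedge d\eta=-dx\wedge dy\wedge d\theta$, exactly as stated. Because the right-hand side is a volume form on the three-manifold $Q$, it vanishes nowhere; this nonvanishing of $\eta\wedge d\eta$ is by definition the statement that $\eta$ is a contact form, so $D=\ker\eta$ is a contact distribution.

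There is no genuine conceptual obstacle here: the lemma is a routine computation. The only thing requiring care is the bookkeeping of signs — both the signs in $\dot\b$ coming from differentiating $\cos\theta$ and $\sin\theta$, and the signs picked up when permuting the wedge monomials into the standard volume form. Getting these right is precisely what produces the clean Pythagorean cancellation and the stated overall minus sign.
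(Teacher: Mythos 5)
Your proposal is correct and is essentially the paper's argument: a direct verification in the coordinates $(x,y,\theta)$ that the no-skid condition is $\eta(\dot\gamma)=0$, followed by the routine wedge computation. The only cosmetic difference is that you obtain the coordinate form of the condition by differentiating $\b=\f-(\ttc,\tts)$ and pairing $\dot\b$ with $(-\tts,\ttc)$, whereas the paper phrases the same computation via the orthogonal decomposition $\dot\f=\dot\f_\|+\dot\f_\perp$ and the equivalence $\dot\b\,\|\,\v\iff\dot\f_\|=\dot\b$; you also write out the $\eta\wedge d\eta$ computation, which the paper leaves implicit.
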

\begin{proof}Let $\q(t)=(\b(t), \f(t))$ be a curve in $Q$ satisfying the non-skid condition. Let $\v:=\f-\b=(\ttc, \tts)$ and  decompose orthogonally $\dot\f=\dot\f_\|+\dot\f_\perp, $ where $\dot\f_\|,\dot\f_\perp$ are
the orthogonal projections of $\dot\f$ on $\v, \v^\perp$, respectively.   The condition $\|\f-\b\|=\mathrm{const}.$ and the no-skid condition $\dot\b\| \v$ are equivalent to $\dot\f_\|=\dot\b$. From $\f=\b+\v$ follows 
$\dot\f=\dot\b+\dot\v,$ hence $\dot\f_\|=\dot\b$ is equivalent to $\dot\v=\dot\f_\perp=
\dot\f-\dot\f_\|=\dot\f-\langle \dot\f, \v\rangle\v.$ In coordinates, this is $\dot\theta-\ttc\dot y+\tts\dot x=0.$ That is, $\dot\q\in \mathrm{Ker}(\eta).$
\end{proof}

Thus minimizing bike paths are arcs of normal geodesics. An orthonormal framing for $D=\mathrm{Ker}(\eta)$ is 
\be \label{eq:X12} X_1  :=  \partial_x -{\tts} \partial_\theta,  \
   X_2   :=\partial_{y} + {\ttc}\partial_{\theta},\ee 
with the associated 
\be\label{eq:Pi}
 P_1:=P_{X_1}=p_x -{\tts}p_\theta,\  P_2:=P_{X_2}=p_{y} + {\ttc}p_{\theta}.
 \ee 
 The Hamiltonian equations associated  to 
$ H=\left[(P_1)^2+(P_2)^2\right]/2$
are 
\begin{align}\label{eq:ham}
\begin{array}{lcl}
\dot x=\partial_{p_x}H=P_1=p_x -\tts p_\theta,&\quad& \dot p_x=-\partial_{x}H=0,\\
\dot y=\partial_{p_y}H=P_2= p_{y} + {\ttc}p_{\theta},&\quad&\dot p_y=-\partial_{y}H=0,\\
\dot\theta=\partial_{p_\theta}H=p_\theta+\ttc p_y-\tts p_x,&\quad&\dot p_\theta=-\partial_{\theta}H=p_\theta(\ttc p_x+\tts p_y).
\end{array}\end{align}

So $p_x, p_y$ are constant, as well as $H=(\dot x^2+\dot y^2)/2$. Fixing $H=1/2$  thus means that $\f(t)$ is parametrized by arc length.  Rotations act on the space of solutions  of equations \eqref{eq:ham} by  rotating $(x,y)$, $(p_x, p_y)$ and $(\ttc,\tts)$ simultaneously and shifting $\theta$, leaving $p_\theta$ unchanged. So we can assume  without loss of generality  say $p_y=0$ and $a:=p_x\geq 0.$ 
Equations \eqref{eq:ham} and $H=1/2$   now become 
\be\label{eq:ham1}\dot x=a -\tts p_\theta, \quad \dot y=\ttc p_{\theta},\quad 
 \dot\theta=p_\theta-a\tts ,  \quad\dot p_\theta=a \ttc p_\theta, \quad \dot\theta^2+a^2\ttc^2=1.
\ee

\begin{lemma}\label{lemma:kappa}
 Let $\kappa$ be the geodesic curvature of the front track $\f(t)=(x(t),y(t))$ of a solution to equations  \eqref{eq:ham1}.
  Then $\kappa=p_\theta.$
 \end{lemma}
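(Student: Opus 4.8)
The plan is to compute the geodesic (signed) curvature of the planar front track $\f(t)=(x(t),y(t))$ directly from the standard coordinate formula
\[
\kappa=\frac{\dot x\,\ddot y-\dot y\,\ddot x}{(\dot x^2+\dot y^2)^{3/2}},
\]
and then to exploit the energy normalization. Since solutions of \eqref{eq:ham1} satisfy $\dot x^2+\dot y^2=2H=1$, the curve $\f$ is parametrized by arc length and the denominator is $1$. Thus the whole lemma reduces to showing that the numerator satisfies $\dot x\,\ddot y-\dot y\,\ddot x=p_\theta$.

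First I would differentiate the velocity components $\dot x=a-\tts\,p_\theta$ and $\dot y=\ttc\,p_\theta$ once more in $t$, treating $a$ as constant and using $\dot\tts=\ttc\dot\theta$, $\dot\ttc=-\tts\dot\theta$ together with the evolution equations $\dot\theta=p_\theta-a\tts$ and $\dot p_\theta=a\ttc\,p_\theta$ from \eqref{eq:ham1}. The key observation is that the terms proportional to $a$ cancel: in $\ddot x$ the two $a\tts\ttc\,p_\theta$ contributions cancel, and in $\ddot y$ the surviving $a$-terms combine through $\tts^2+\ttc^2=1$. I expect the clean expressions
\[
\ddot x=-\ttc\,p_\theta^2,\qquad \ddot y=a\,p_\theta-\tts\,p_\theta^2 .
\]

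Substituting these into the numerator and factoring out $p_\theta$, I would obtain
\[
\dot x\,\ddot y-\dot y\,\ddot x
=p_\theta\bigl[(a-\tts p_\theta)^2+\ttc^2p_\theta^2\bigr]
=p_\theta\,(\dot x^2+\dot y^2).
\]
Because $\dot x^2+\dot y^2=1$, this equals $p_\theta$, which proves $\kappa=p_\theta$.

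As for the main obstacle, there is really no deep difficulty here; the statement collapses to a short differentiation once the arc-length normalization is invoked. The two points that require care are the sign convention for geodesic curvature (the formula above orients $\kappa$ so that a left turn is positive, and this sign must be consistent with the later identification of the front track as a specific elastica), and the recognition at the end that the bracketed factor is not an accident but is exactly $\dot x^2+\dot y^2=2H=1$. It is precisely this energy-$\tfrac12$ normalization that makes the curvature coincide with the momentum $p_\theta$ on the nose, and the same reason that $\theta$ functions as the turning angle of $\f$ along the geodesic.
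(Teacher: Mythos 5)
Your proof is correct and takes essentially the same route as the paper: both compute the signed curvature $\dot x\,\ddot y-\dot y\,\ddot x$ of the unit-speed front track by differentiating the velocity components using \eqref{eq:ham1} and then simplifying to $p_\theta$. The paper merely compresses the computation into the single line $\dot x\ddot y-\dot y\ddot x=\dot\theta+a\tts=p_\theta$, whereas you display the intermediate expressions for $\ddot x$ and $\ddot y$; the content is identical.
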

 \begin{proof} 
We  calculate: $\kappa=\dot x\ddot y- \dot y\ddot x=\dot\theta +a \tts=p_\theta.$
\end{proof} 
We can thus rewrite  the unit speed geodesic equations  \eqref{eq:ham1} as
\be \label{eq:ham2}
\dot x=a -\tts \kappa, \quad \dot y=\ttc \kappa,\quad 
 \dot\theta=\kappa-a\tts ,  \quad\dot \kappa=a \ttc \kappa,\quad \dot\theta^2+a^2\ttc^2=1.
\ee

We are now ready to prove the 4 claims. 

\subsection{Proving the 1st claim (Theorem \ref{thm:main1})}

 \begin{proposition}\label{prop:kappaeq} The curvature $\kappa$ of the front track of a bicycle geodesic (solution to equations \eqref{eq:ham}), as a function of arc length $t$,  
 satisfies  the  `energy form' of the  elastica equation \eqref{eq:energy_form}, 
 $${\dot\kappa^2\over 2}+{\kappa^4\over 8}+{A\kappa^2\over 2}=B,$$ 
with 
\be\label{eq:AB}
A=-{a^2+1\over 2},\ B=-{(a^2-1)^2\over 8}.
\ee
That is, the front track  is a non-inflectional elastica or a straight line.
\end{proposition}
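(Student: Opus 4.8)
The plan is to eliminate the angular quantities $\ttc=\cos\theta$ and $\tts=\sin\theta$ from the unit-speed geodesic equations \eqref{eq:ham2} so as to obtain a closed first-order relation among $\kappa$, $\dot\kappa$ and the constant $a$ alone. By Lemma \ref{lemma:kappa} the curvature evolution is $\dot\kappa = a\,\ttc\,\kappa$, so squaring immediately gives $\dot\kappa^2 = a^2\ttc^2\kappa^2$. The only other ingredients I need are the angular equation $\dot\theta = \kappa - a\,\tts$ and the unit-speed constraint $\dot\theta^2 + a^2\ttc^2 = 1$ from \eqref{eq:ham2}. The strategy is to feed the former into the latter to extract $a\,\tts$ algebraically in terms of $\kappa$ and $a$, and then to replace $\ttc^2$ by $1-\tts^2$ so that every trigonometric term can be substituted out of the expression for $\dot\kappa^2$.

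Concretely, substituting $\dot\theta = \kappa - a\,\tts$ into $\dot\theta^2 + a^2\ttc^2 = 1$ and simplifying with $\tts^2+\ttc^2=1$ collapses to the purely algebraic relation
\[
\kappa^2 - 2a\,\tts\,\kappa + a^2 = 1, \qquad\text{i.e.}\qquad 2a\,\tts\,\kappa = \kappa^2 + a^2 - 1 .
\]
I would then write $\dot\kappa^2 = a^2\ttc^2\kappa^2 = a^2\kappa^2 - (a\,\tts\,\kappa)^2$, using $\ttc^2 = 1-\tts^2$; note this manipulation is division-free and therefore valid at every point, including any zeros of $\kappa$. Inserting $a\,\tts\,\kappa = \tfrac12(\kappa^2+a^2-1)$ yields
\[
\dot\kappa^2 = a^2\kappa^2 - \tfrac14\bigl(\kappa^2 + a^2 - 1\bigr)^2 .
\]
Comparing this with the rearranged energy form $\dot\kappa^2 = 2B - \tfrac14\kappa^4 - A\kappa^2$, the $\kappa^4$ coefficients agree automatically, the $\kappa^2$ coefficients match precisely when $A = -\tfrac{a^2+1}{2}$, and the constant terms force $2B = -\tfrac14(a^2-1)^2$; this is exactly \eqref{eq:energy_form} with the values \eqref{eq:AB}.

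I do not anticipate a serious obstacle: once the system is in the form \eqref{eq:ham2}, the argument is a short elimination of $\ttc,\tts$, and the only point needing a word of care is the degenerate straight-line case. If $\kappa\equiv 0$ the algebraic relation above forces $a^2=1$, hence $a=1$ and $B=0$, so the energy equation holds trivially and the front track is a line. To finish the stated conclusion, that a non-linear front track is a \emph{non-inflectional} elastica, I would read off the sign $B = -\tfrac18(a^2-1)^2 \le 0$: when $B<0$ the energy form prevents $\kappa$ from vanishing, as recorded in Section \ref{sec:elasticae}, while $B=0$ (forcing $a=1$, $A=-1<0$) is exactly the line/soliton locus already covered by the statement. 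The inequality $2B+A^2\ge 0$ of \eqref{eq:sq} then holds automatically.
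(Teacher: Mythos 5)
Your proposal is correct and follows essentially the same route as the paper: both use $\dot\kappa=a\ttc\kappa$ together with the unit-speed constraint to derive the algebraic identity $2a\tts\kappa=\kappa^2+a^2-1$, then eliminate the trigonometric terms to obtain $4\dot\kappa^2+(\kappa^2+a^2-1)^2=4a^2\kappa^2$ and read off $A$ and $B$. Your added remarks on the division-free substitution and the $\kappa\equiv 0$ case are fine but not needed beyond what the paper records.
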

\begin{proof} The statement is invariant under rigid motions, so we can use instead equations \eqref{eq:ham2}. Then 
$\dot\kappa=a\ttc\kappa
$
and $\dot\theta^2=(\kappa-a\tts)^2=1-a^2\ttc^2,$ which simplifies to $2a\tts\kappa=\kappa^2+a^2-1.$ Thus 
$4\dot\kappa^2+(\kappa^2+a^2-1)^2=4a^2\kappa^2$, which gives the stated formula. 
\end{proof}

\begin{remark}
In the last paragraph of Section \ref{subsec:bike transport} below we sketch an alternative proof of Claim 1.  This alternative proof uses a relation between hyperbolic rolling geodesics
and bicycling geodesics and the fact that the hyperbolic rolling geodesics had been already  computed  and shown to correspond to elasticae \cite{Jurdjevic1}, \cite{Jurdjevic2}.

\end{remark}

\subsection{Proving the 2nd claim}  
The   `shape parameter'  of the front track is 
$\mu=-2B/A^2=(a^2-1)^2/(a^2+1)^2\in[0,1].$   Each $\mu\in(0,1)$ has 2 preimages, $a$ and $1/a$, one in $(0,1)$  the other in $ (1,\infty)$. It follows that  each NIE shape appears as a front track for two values of $a$. Let us determine the widths of these front tracks. Let  $\kappa_{max},  \kappa_{min}> 0$ be  the maximum and minimum value  of $\kappa$ along the front track. 

\begin{proposition}\label{prop:width}
The front track of a  solution to equations \eqref{eq:ham2} with $\kappa>0$ has $\kappa_{max}=1+a$ and $\kappa_{min}=|1-a|$. It follows that the width of the front track is
\begin{itemize}
\item 2 if $0< a\leq 1$  (a `wide' front track);
\item $2/a$ if  $a>1$    (a `narrow' front track). 
\end{itemize}
See Figure \ref{fig:geod}. 
\end{proposition}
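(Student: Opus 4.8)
The plan is to locate the extrema of the curvature along the front track first, and then to read off the width from the observation that $\kappa$ is an affine function of the normalized $y$-coordinate. Throughout I use the normalized geodesic equations \eqref{eq:ham2}, and I take $\kappa>0$ (a non-inflectional track has $\kappa$ of one sign) and $a>0$ (the degenerate case $a=0$ being the circle).

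First I would find the critical points of $\kappa$. From \eqref{eq:ham2} we have $\dot\kappa=a\ttc\kappa$, so at any extremum of $\kappa$, since $a>0$ and $\kappa>0$, we must have $\ttc=\cos\theta=0$ and hence $\tts=\pm1$. Feeding $\tts=\pm1$ into the relation $2a\tts\kappa=\kappa^2+a^2-1$ derived in the proof of Proposition \ref{prop:kappaeq} gives, for each choice of sign, a quadratic
\[
\kappa^2\mp 2a\kappa+(a^2-1)=0,\qquad\text{with roots}\qquad \kappa=\pm a+1\ \text{ and }\ \kappa=\pm a-1.
\]
Of the four roots $\{a+1,\ a-1,\ 1-a,\ -a-1\}$ only $a+1$ and $|1-a|$ are positive (the root $a-1$ survives when $a>1$, the root $1-a$ when $a<1$), and since $a+1>|1-a|$ for every $a>0$ these are $\kappa_{max}=1+a$ and $\kappa_{min}=|1-a|$, as asserted.

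For the width I would use the identity $\dot\kappa=a\ttc\kappa=a\dot y$, again read directly off \eqref{eq:ham2}, which integrates to $\kappa=ay+\mathrm{const}$. Thus in these coordinates the directrix is horizontal, $\kappa$ depends affinely on $y$, and the front track is trapped in the horizontal strip $y_{min}\le y\le y_{max}$ whose height is $y_{max}-y_{min}=(\kappa_{max}-\kappa_{min})/a$. Moreover $\ttc=0$ at the extrema of $\kappa$ forces $\dot y=\ttc\kappa=0$ there, so the track has a horizontal tangent precisely on the two boundary lines $y=y_{max}$ and $y=y_{min}$; these lines are therefore tangent to the curve and no thinner horizontal strip contains it. Consequently
\[
\mathrm{width}=\frac{\kappa_{max}-\kappa_{min}}{a}=\frac{(1+a)-|1-a|}{a},
\]
which evaluates to $2$ for $0<a\le1$ and to $2/a$ for $a>1$, giving exactly the two cases of the statement.

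The one point that needs genuine justification---and the step I expect to be the main obstacle---is that this horizontal strip realizes the \emph{infimum} over all strip directions in the definition of width, not merely a vertical bound. I would argue this from the global shape of the track: it is bounded in $y$ (namely $y\in[y_{min},y_{max}]$) but unbounded in $x$. Any strip bounded by two lines of nonzero slope has the form $c_1\le y-mx\le c_2$ with $m\ne0$, and letting $x\to\pm\infty$ along the track (with $y$ staying bounded) drives $y-mx$ outside $[c_1,c_2]$; hence no tilted strip can contain the whole curve, every admissible strip is horizontal, and the minimal one is the strip computed above. Finally, the circle ($a=0$) falls outside the formula because $\kappa$ is then constant and $\kappa=ay+\mathrm{const}$ degenerates; there the width is simply the diameter $2/\kappa=2$, consistent with the stated exception, while the soliton ($a=1$, where $\kappa_{min}=0$ is only approached) is already covered by the formula and yields width $2$.
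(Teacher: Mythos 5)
Your argument is correct and is essentially the paper's proof: the critical values of $\kappa$ are read off from $(\kappa^2+a^2-1)^2=4a^2\kappa^2$ (equivalently your two quadratics from $\tts=\pm1$), and the identity $\dot\kappa=a\dot y$ converts $\Delta\kappa$ into the height of the bounding horizontal strip. Your closing paragraph checking that this horizontal strip actually realizes the infimum in the definition of width (the track being bounded in $y$ but unbounded in $x$) addresses a point the paper leaves implicit; the unboundedness in $x$ that you assert does hold, since a zero drift per period would make the curve closed, and no non-inflectional elastica other than the circle is closed.
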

\begin{proof} Since $\dot\kappa=0$ at  $\kappa_{max},  \kappa_{min}$, these critical values  must satisfy $(\kappa^2+a^2-1)^2=4a^2\kappa^2$ (see Proposition \ref{prop:kappaeq} and its proof). The solutions of this equation are $\kappa=\pm1\pm a.$  For $0<a<1$ the positive solutions are $1\pm a$, hence $\Delta\kappa=\kappa_{max}-\kappa_{min}=2a$. For 
$a>1$, the positive solutions are $a\pm 1$, hence  $\Delta\kappa=2$. From $\dot \kappa=a\dot y$ it  follows that $\Delta y=2$ in the 1st case and $2/a$ in the 2nd case, as needed. 
\end{proof}

\begin{remark}
One can also use equations  \eqref{eq:ham2} to find the widths of the respective back tracks: 
 $\left(1 - \sqrt{1 - a^2}\,\right)/a$ for  a `wide' front track,   and $ 2/a $ for a `narrow' front track (same as the width of the front track).
\end{remark}

\subsection{Proving the 3rd claim}

\begin{proposition}\label{prop:frame}
Consider a solution of equations  \eqref{eq:ham2} with $a>0$ and $\kappa>0$ (this can always be arranged for a non-linear front track  by appropriate reflections about the $x$ and $y$ axes). Then 
\begin{enumerate}
\item   $\theta=\pi/2$ at a point where $\kappa_{max}$ occurs. 
\item $\theta=-\pi/2$ at a point where $\kappa_{min}$ occurs  and $0<a<1$ (a `wide'  front  track). 
\item  $\theta=\pi/2$ at a point where $\kappa_{min}$ occurs  and  $a>1$ (a `narrow' front track).
 \end{enumerate}
See Figure \ref{fig:geod}. 
\end{proposition}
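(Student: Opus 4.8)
The plan is to exploit two facts already established: the evolution law $\dot\kappa = a\,\ttc\,\kappa$ from equations \eqref{eq:ham2}, together with the algebraic relation $2a\kappa\,\tts = \kappa^2 + a^2 - 1$ that was extracted inside the proof of Proposition \ref{prop:kappaeq}.

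First I would locate the critical points of $\kappa$ along the front track. Because we assume $a > 0$ and $\kappa > 0$, the evolution law $\dot\kappa = a\,\ttc\,\kappa$ shows that $\dot\kappa$ vanishes precisely when $\ttc = \cos\theta = 0$. Hence at any point where $\kappa$ attains $\kappa_{max}$ or $\kappa_{min}$ we must have $\theta \equiv \pm\pi/2 \pmod{2\pi}$, and the two possibilities are distinguished by the value $\tts = \sin\theta = \pm 1$. The proposition therefore reduces to reading off the sign of $\sin\theta$ at each extremum.

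For this I would rewrite the algebraic relation as $\tts = (\kappa^2 + a^2 - 1)/(2a\kappa)$ and substitute the extremal curvatures $\kappa_{max} = 1 + a$ and $\kappa_{min} = |1 - a|$ furnished by Proposition \ref{prop:width}. Substituting $\kappa = 1 + a$, the right-hand side factors and collapses to $+1$, so $\theta = \pi/2$, proving (1). For the minimum I would split on $a$: when $0 < a < 1$ one has $\kappa_{min} = 1 - a$ and the same substitution gives $\tts = -1$, i.e. $\theta = -\pi/2$, which is (2); when $a > 1$ one has $\kappa_{min} = a - 1$ and the quotient again collapses to $+1$, giving $\theta = \pi/2$, which is (3).

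I expect no genuine analytic obstacle; the content is purely algebraic, and the only care required is bookkeeping. One should confirm that each listed extremal value is indeed a root of the appropriate quadratic — the substitution must produce exactly $\pm 1$, since $\cos\theta = 0$ forces $|\sin\theta| = 1$ — and that the two regimes of $a$ are sorted correctly, so that $\kappa_{max}$ is never attached to $\theta = -\pi/2$. This is made unambiguous by observing that $\tts = 1$ yields the positive roots $\kappa = a \pm 1$ (with $a - 1$ positive only for $a > 1$), while $\tts = -1$ yields the single positive root $\kappa = 1 - a$ (valid only for $a < 1$).
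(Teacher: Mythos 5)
Your proposal is correct and follows essentially the same route as the paper: both arguments use $\dot\kappa=a\,\ttc\,\kappa$ to force $\ttc=0$, $\tts=\pm1$ at the extrema, then combine the extremal values $\kappa_{max}=1+a$, $\kappa_{min}=|1-a|$ with a consequence of the unit-speed condition to pin down the sign of $\tts$. The only (cosmetic) difference is that you solve the identity $2a\kappa\,\tts=\kappa^2+a^2-1$ directly for $\tts$ and substitute, whereas the paper plugs the extremal curvatures into $\dot x=a-\tts\kappa$ and eliminates the wrong sign of $\tts$ by contradiction with $|\dot x|=1$; your version is a touch more streamlined but mathematically equivalent.
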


\begin{remark} The $a=1$ case is either a soliton, where $\kappa$ does not have a minimum, or a straight line. The  $a=0$ case is that of the unit circle and is safely left to the reader.
\end{remark}

\mn {\em Proof.}
By  equations \eqref{eq:ham2}, $\dot \kappa=a\ttc\kappa=a\dot y$. Thus in all 3 cases,  $\dot \kappa=0$ implies  $\ttc=\dot y=0$, which implies $\tts=\pm1$ and $\dot x=\pm 1$. We shall also use the formulas $\kappa_{max}=1+a$ and $\kappa_{min}=|1-a|$ from Proposition \ref{prop:width}, and $\dot x=a -\tts \kappa$  of equations  \eqref{eq:ham2}. 

\begin{enumerate}
\item Substitute  $\kappa=1+a$ in  $\dot x=a -\tts \kappa$  and get  $\dot x+\tts=a(1-\tts)$.  If $\tts=-1$ then $\dot x=2a+1>1$, which is impossible, hence $\tts=1, \dot x=-1$ and $\theta=\pi/2$.

\item If  $0<a<1$ then $\kappa_{min}=1-a.$ Substitute this in $\dot x=a -\tts \kappa$  and get  $\dot x+\tts=a(1+\tts)$.  If $\tts=1$ then $\dot x=2a-1.$  Together with $0<a<1$ this implies  $-1<\dot x<1$ which contradicts $\dot x=\pm 1.$ Hence  $\tts=-1$, $\dot x=1$ and $\theta=-\pi/2$ at a point where $\kappa_{min}$ occurs.

\item  If  $a>1$ then $\kappa_{min}=a-1.$ Substitute this in $\dot x=a -\tts \kappa$  and get  $\dot x-\tts=a(1-\tts)$.  If $\tts=-1$ then $\dot x=2a-1.$  Together with $a>1$ this implies  $\dot x>1$, which contradicts $\dot x=\pm 1.$ Hence  $\tts=1$, $\dot x=1$ and $\theta=\pi/2$ at a point where $\kappa_{min}$ occurs. \qed
\end{enumerate}

\subsection{Proving the  4th claim}

 Let $\gamma$ be a bicycle geodesic. A {\em vertex} of $\gamma$ is a point  on it  where an  extremum of the  curvature  of the front track  occurs ($\dot\kappa=0$). Our involution  $\Phi:Q\to Q$ is a \sR isometry,  hence  $\Phi\circ\gamma$ is a geodesic as well.

\begin{lemma} If $\gamma$ is a bicycle geodesic with $a\neq 1$ (that is, its front track is not a   line or soliton) then 
$\Phi$ maps  vertices of $\gamma$ to  vertices of $\Phi\circ\gamma.$
\end{lemma}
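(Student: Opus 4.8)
The plan is to show that a vertex of $\gamma$ is characterized by a condition that $\Phi$ preserves, namely that the bicycle frame is perpendicular to the front track. Recall from the proof of Proposition \ref{prop:frame} that along a solution of \eqref{eq:ham2} one has $\dot\kappa = a\, \ttc\, \kappa$, so (since $a \neq 0$ and, for an NIE with $a \neq 1$, $\kappa \neq 0$ everywhere) a point is a vertex precisely when $\ttc = \cos\theta = 0$, i.e. when $\theta = \pm\pi/2$. Geometrically this is exactly the statement that the frame direction $\v = (\ttc,\tts)$ is perpendicular to the front-track velocity $\dot\f = (\dot x, \dot y)$; indeed $\langle \dot\f, \v\rangle = \ttc\,\dot x + \tts\, \dot y$, and one checks using \eqref{eq:ham2} that this inner product vanishes exactly when $\ttc = 0$. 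So first I would record this clean characterization: \emph{$t_0$ is a vertex of $\gamma$ iff the frame $\v$ is orthogonal to the front-track velocity $\dot\f$ at $t_0$.}

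The second, and central, step is to observe that this perpendicularity condition is manifestly invariant under $\Phi$. From the proof of Lemma \ref{flip}, the derivative of $\Phi$ sends $(\dot\b,\dot\f)$ to $(\dot\b, 2\dot\b - \dot\f)$, and $2\dot\b - \dot\f$ is the \emph{reflection} of $\dot\f$ about the frame line $\v$ (this was shown explicitly there: $2\dot\b - \dot\f = \dot\f_\| - \dot\f_\perp$). Under $\Phi$ the back wheel $\b$ is fixed and the frame vector $\v = \f - \b$ is negated, so the frame \emph{line} is unchanged. Reflecting $\dot\f$ across a line cannot change whether $\dot\f$ is perpendicular to that line: if $\dot\f \perp \v$ then $\dot\f_\| = 0$, so its reflection $-\dot\f$ is still perpendicular to $\v$. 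Hence the front-track velocity of $\Phi\circ\gamma$ is perpendicular to its frame at $t_0$ precisely when the same holds for $\gamma$, so $\Phi$ carries vertices to vertices.

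To make this airtight I would convert the geometric reflection statement into the coordinate language of \eqref{eq:ham2}. Writing the front track of $\Phi\circ\gamma$ as $\tilde\f = 2\b - \f$ with frame angle $\tilde\theta = \theta + \pi$, one has $\tilde\ttc = -\ttc$, and the vertex condition $\dot{\tilde\kappa} = a\,\tilde\ttc\,\tilde\kappa = 0$ for $\Phi\circ\gamma$ reduces, after using $a \neq 0$ and $\tilde\kappa \neq 0$, to $\tilde\ttc = 0$, i.e. $\ttc = 0$, which is exactly the vertex condition for $\gamma$. The main obstacle I anticipate is bookkeeping rather than conceptual: I must confirm that $\Phi \circ \gamma$ is again a unit-speed solution of the normal geodesic equations \eqref{eq:ham2} (so that Lemma \ref{lemma:kappa} applies and its curvature $\tilde\kappa$ is governed by the same ODE) and pin down how the constant $a$ and the frame angle transform under $\Phi$. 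This is guaranteed in principle because $\Phi$ is a \sR isometry and hence maps normal geodesics to normal geodesics, but I would verify explicitly that it preserves the value of $a$ (equivalently the shape parameter $\mu$), so that $\Phi\circ\gamma$ is indeed a geodesic of the same type and the hypothesis $a \neq 1$ is inherited. Once the invariance of the perpendicularity condition is established, the conclusion is immediate.
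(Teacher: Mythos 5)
Your proposal is correct and follows essentially the same route as the paper: both characterize a vertex by the condition $\langle\dot\f,\v\rangle=0$ (frame perpendicular to the front track, via Proposition \ref{prop:frame}) and then check that $\Phi$ preserves this condition, your reflection-of-$\dot\f$-about-the-frame-line argument being the same computation as the paper's direct evaluation $\langle\dot{\tilde\f},\tilde\v\rangle=-\langle\dot\f,\v\rangle$. The extra bookkeeping you flag (that $\Phi\circ\gamma$ is again a geodesic of the same normalized form) is handled in the paper simply by noting that $\Phi$ is a \sR isometry, so Proposition \ref{prop:frame} applies to $\Phi\circ\gamma$ as well.
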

\begin{proof} Let $\gamma(t)=(x(t),y(t), \theta(t))$,  $\f(t)=(x(t), y(t))$  its front track and $\v(t)=(\cos \theta(t), \sin\theta(t))$ the frame direction. By Proposition \ref{prop:frame}, the vertices of $\gamma$ are the points where the frame is 
perpendicular to the front track, $\langle \dot\f,\v\rangle=0.$ Let $\tilde\gamma=\Phi\circ\gamma$. 
Then  $\tilde\f=\f-2\v$ and  $\tilde \v=-\v$, hence $\langle \dot{\tilde\f}, \tilde\v\rangle=
-\langle \dot{\f}-2\dot\v, \v\rangle=-\langle \dot{\f}, \v\rangle$, since  $\langle\v,\v\rangle=1$ implies $\langle\v,\dot\v\rangle=0.$ It follows that vertices of $\gamma$ and $\tilde\gamma$ occur simultaneously.
\end{proof}

The statement of Claim 4 is invariant under rigid motions and time reparametrizations, so we can assume, without loss of generality, that  $\gamma(t)=(x(t), y(t), \theta(t))$ satisfies equations \eqref{eq:ham2} with $a>0$, $a\neq 1$, $\kappa>0$ and  $\f_0=\f(0)$ is a point where $\kappa_{max}$ occurs. According to Proposition \ref{prop:frame} and its proof we then have $\theta_0=\pi/2$, 
$\v_0=(0,1)$,  $\kappa_0=1+a,$ $\dot\f_0=(-1,0)$ and $\ddot\f_0=(0, -\kappa_0)=-(0,1+a)$. 

Now let $\tilde\f(t)$ be the front track of $\tilde\gamma=\Phi\circ\gamma$. That is, $\tilde\f(t)=\f(t)-2\v(t).$

\begin{lemma}
{\em (1)}  $\dot{\tilde{\mathbf{\! f}}}_0=-\dot\f_0=(1,0),$ {\em (2) }
   $\ddot{\tilde{\mathbf{\! f}}}_0=(0,1-a). $
\end{lemma}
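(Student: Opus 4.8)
The plan is to simply differentiate the relation $\tilde\f(t)=\f(t)-2\v(t)$ twice and evaluate at $t=0$, where $\v(t)=(\ttc,\tts)=(\cos\theta(t),\sin\theta(t))$. Thus $\dot{\tilde\f}=\dot\f-2\dot\v$ and $\ddot{\tilde\f}=\ddot\f-2\ddot\v$, and since $\ddot\f_0=(0,-(1+a))$ is already recorded in the paragraph preceding the lemma, the whole computation reduces to finding $\dot\v_0$ and $\ddot\v_0$. By the chain rule $\dot\v=\dot\theta\,(-\tts,\ttc)$ and $\ddot\v=\ddot\theta\,(-\tts,\ttc)-\dot\theta^2\,(\ttc,\tts)$, so everything is controlled by the three scalars $\theta_0$, $\dot\theta_0$, $\ddot\theta_0$.

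First I would pin down the initial angular data from equations \eqref{eq:ham2} together with the standing assumption that $t=0$ is a point where $\kappa_{max}$ occurs. There $\theta_0=\pi/2$, hence $\ttc_0=0$, $\tts_0=1$, and $\kappa_0=1+a$. Then $\dot\theta_0=\kappa_0-a\tts_0=(1+a)-a=1$. For the second derivative, differentiate $\dot\theta=\kappa-a\tts$ to get $\ddot\theta=\dot\kappa-a\ttc\dot\theta$; the key simplification is that $\ttc_0=0$ forces the term $a\ttc_0\dot\theta_0$ to vanish and, via $\dot\kappa=a\ttc\kappa$, also forces $\dot\kappa_0=0$, so that $\ddot\theta_0=0$.

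Assembling the pieces then finishes both assertions. For (1), $\dot\v_0=1\cdot(-1,0)=(-1,0)$, whence $\dot{\tilde\f}_0=\dot\f_0-2\dot\v_0=(-1,0)+(2,0)=(1,0)=-\dot\f_0$. For (2), $\ddot\v_0=0\cdot(-1,0)-1\cdot(0,1)=(0,-1)$, whence $\ddot{\tilde\f}_0=\ddot\f_0-2\ddot\v_0=(0,-(1+a))+(0,2)=(0,1-a)$.

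I do not expect any genuine obstacle here: the argument is a direct application of the chain rule and the geodesic equations. The only step requiring care is the verification that $\ddot\theta_0=0$, which rests entirely on the vanishing of $\ttc_0$ at a vertex, and otherwise the proof amounts to the sign bookkeeping in the expressions for $\dot\v$ and $\ddot\v$.
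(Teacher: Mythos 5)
Your proof is correct and follows essentially the same route as the paper's: both reduce everything to the scalars $\theta_0=\pi/2$, $\dot\theta_0=1$, $\ddot\theta_0=0$ extracted from equations \eqref{eq:ham2}, then apply the chain rule to $\v=(\cos\theta,\sin\theta)$ and assemble $\dot{\tilde\f}_0$, $\ddot{\tilde\f}_0$. Incidentally, your final arithmetic $(0,-(1+a))+(0,2)=(0,1-a)$ is the right one; the paper's own last displayed step contains a sign slip, writing $(0,1+a)$ even though the lemma (correctly) asserts $(0,1-a)$.
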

\begin{proof} (1) From equation \eqref{eq:ham2}, 
$\dot\theta=\kappa-a\tts.$ At $t=0$, $\kappa_0=1+a,$ $\theta_0=\pi/2$, hence $\dot\theta_0=1.$ Now $\dot\v=\dot\theta(-\tts,\ttc)$, hence $\dot\v_0=(-1,0).$ Thus $\dot{\tilde{\mathbf{\! f}}}_0=\dot\f_0-2\dot\v_0=(-1,0)-2(-1,0)=(1,0).$

\mn (2) From equations  \eqref{eq:ham2}, 
$\ddot\theta=\dot\kappa-a\dot\theta\ttc=a\ttc\kappa-a\ttc(\kappa-a\tts)= a\ttc\tts,$ hence $\ddot\theta_0=0.$
Thus $\ddot\v=\ddot\theta(-\tts,\ttc)- \dot\theta^2(\ttc,\tts)$ implies  $\ddot\v_0=(0,-1).$ It follows that 
 $\ddot{\tilde{\mathbf{\! f}}}_0=\ddot\f_0-2\ddot\v_0=(0,-1-a)-2(0,-1)= (0, 1+a).$
\end{proof}
\begin{figure}[ht]
\centering\def\svgwidth{\textwidth}\import{./figures/}{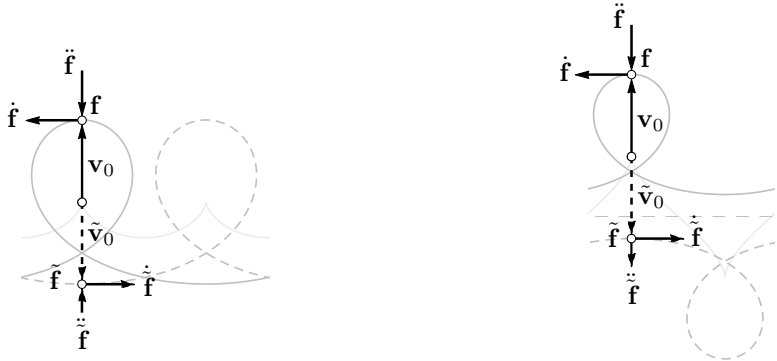} 
\caption{The proof of claim 4 for `wide' (left) and
`narrow' (right) front tracks.}
\label{fig:claim4}
\end{figure}
We can conclude from the last lemma:
\begin{proposition}\label{prop:flip}
For any bicycle geodesic $\gamma$ with $a\neq 0, 1$, let $\tilde\gamma=\Phi\circ\gamma$. Then 
\begin{itemize}
\item if $0<a<1$ (wide front  track) then the front track of $\tilde\gamma$  is the result of translating the front track of $\gamma$  along its directrix for half its length;

\item 
 if  $a>1$ (narrow front  track) then the front track of $\tilde\gamma$ is the result of translating the front track of $\gamma$ along its directrix for half its length, then reflecting about the directrix. 
 \end{itemize}
 See Figure \ref{fig:claim4}. 
\end{proposition}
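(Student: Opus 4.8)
The plan is to show that the flipped geodesic $\tilde\gamma=\Phi\circ\gamma$ is again a solution of \eqref{eq:ham2} \emph{with the same parameter} $a$, so that its front track $\tilde\f=\f-2\v$ is a unit-speed non-inflectional elastica congruent to $\f$; then to pin down the congruence at the shared vertex $t=0$ and read off its type from the sign of $\tilde\kappa_0=1-a$. First I would record the coordinate form of the flip, $\Phi(x,y,\theta)=(x-2\ttc,\,y-2\tts,\,\theta+\pi)$, and compute the cotangent lift of $\Phi$. Since $\Phi$ is a \sR isometry (Lemma \ref{flip}) this lift carries solutions of Hamilton's equations to solutions, and a one-line computation of $(D\Phi)^{-T}$ shows it fixes $p_x$ and $p_y$. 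Hence $\tilde\gamma$ solves \eqref{eq:ham2} with the same $a$ (and $p_y=0$), so by Proposition \ref{prop:width} its front track has the same $\kappa_{max}=1+a$, $\kappa_{min}=|1-a|$, i.e.\ the same shape and size as $\f$. (Equivalently, one may simply verify the curvature identity $\tilde\kappa(t)=\kappa(t)-2a\tts$.)

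Next I would use the data already computed before the statement, namely $\dot{\tilde\f}_0=(1,0)$ and $\ddot{\tilde\f}_0=(0,1-a)$, so that $\tilde\kappa_0=1-a$ and $\tilde\f_0=\f_0-(0,2)$, together with $\theta_0=\pi/2$, $\kappa_0=1+a$ for $\gamma$. I first check that the \emph{directrix is preserved}: integrating $\dot{\tilde\kappa}=a\dot{\tilde y}$ gives $\tilde\kappa=a\tilde y+\tilde c$, and evaluating at $t=0$ with $\tilde y_0=y_0-2$ yields $\tilde c=c$, the same horizontal directrix as for $\f$. I then match $\tilde\f$ at $t=0$ against the vertices of $\f$. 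In the wide case $0<a<1$ one has $\tilde\kappa_0=1-a>0$, which equals $\kappa_{min}$ and, by Proposition \ref{prop:frame}(2), agrees in tangent $(1,0)$, curvature, and concavity with $\f$ at a minimum-curvature vertex; uniqueness for \eqref{eq:ham2} then forces $\tilde\f$ to be an orientation-preserving congruent copy of $\f$, i.e.\ a pure translation, which, preserving the directrix, must be horizontal, hence along the directrix. In the narrow case $a>1$ one has $\tilde\kappa_0=1-a<0$, of \emph{opposite} sign to the value $a-1>0$ at $\f$'s minimum-curvature vertex (Proposition \ref{prop:frame}(3)); the matching congruence therefore reverses orientation, so it is a reflection in a horizontal line, and directrix-preservation forces that line to be the directrix itself, giving a glide reflection about the directrix.

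Finally I would fix the magnitude of the translation to be $L/2$. The quickest route is the involution: since $\Phi^2=\mathrm{id}$ and the induced motion $g$ on front tracks (a translation in the wide case, a glide reflection about the directrix in the narrow case, by some amount $d$) depends only on $a$, composing the flip with itself gives $g^2=$ translation by $2d$ along the directrix; as this must act trivially on the $L$-periodic curve $\f$, we get $2d=L$, i.e.\ $d=L/2$. Alternatively, the translation carries the maximum-curvature vertex $\f_0$ to the neighbouring minimum-curvature vertex, and these lie at horizontal distance $L/2$ by the reflection symmetry of the elastica about its vertex normals. This yields the two bullet points.

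The main obstacle is the first step: establishing $\tilde a=a$, i.e.\ that the flip preserves the wide/narrow class rather than swapping $a\leftrightarrow 1/a$. This is genuinely the crux, because $\Phi$ does \emph{not} descend to a planar isometry of front tracks ($\tilde\f=\f-2\v$ is not a rigid-motion image of $\f$ pointwise), so one cannot transport the shape directly; it is the conservation of the momentum $p_x=a$ under the (cotangent lift of the) flip, equivalently the identity $\tilde\kappa=\kappa-2a\tts$, that does the real work. A secondary point requiring care is distinguishing translation from glide reflection, which hinges entirely on the sign of $\tilde\kappa_0=1-a$ changing as $a$ crosses $1$.
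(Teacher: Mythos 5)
Your proof is correct and follows essentially the same route as the paper: compute the $2$-jet of $\tilde\f$ at a maximum-curvature vertex of $\f$ and match it, via uniqueness for \eqref{eq:ham2}, against the $2$-jet of $\f$ at an adjacent minimum-curvature vertex; you merely make explicit what the paper leaves to its figure and to the phrase ``we can conclude,'' namely that the flip preserves the momentum $a$ (hence the shape and the directrix) and that the shift is $L/2$. One small slip: triviality of $g^2$ on the $L$-periodic curve only gives $2d\in L\Z$, not $2d=L$, but your alternative argument (the maximum-curvature vertex is carried to the adjacent minimum-curvature vertex, which lies at horizontal distance $L/2$ by the reflection symmetry about vertex normals) closes this gap correctly.
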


\section{The proof of  Theorem \ref{thm:main2}.}

We first prove half of Theorem  \ref{thm:main2}, the `if' part,  without invoking Theorem \ref{thm:main1}. Doing so   illustrates  some amusing  bicycling mathematics. 

By the `horizontal lift' of a front track $\f(t)$ we  mean any bicycle path  $({\bf b} (t), {\bf f}(t))$
whose front track projection is  the given curve ${\bf f} (t)$.  Since  a Euclidean line is a   metric line in $\R^2$, and since $\pi_f$ preserves lengths
when applied to bicycle paths,    every horizontal lift of a Euclidean   straight line  is a metric line in $Q$. 
However, just because the front wheel moves in a straight line    does not mean that  the  back wheel moves along the same straight line.   
Indeed, the back wheel typically   traces a  {\em  tractrix}  of width $\ell$ associated to the linear front track (unless at some moment the back wheel lies on the straight line, in which case
the back wheel also travels along the same straight line.)  See Figure \ref{fig:tractrix}.    

\begin{figure}[ht]
\centering
\centering\def\svgwidth{\textwidth}\import{./figures/}{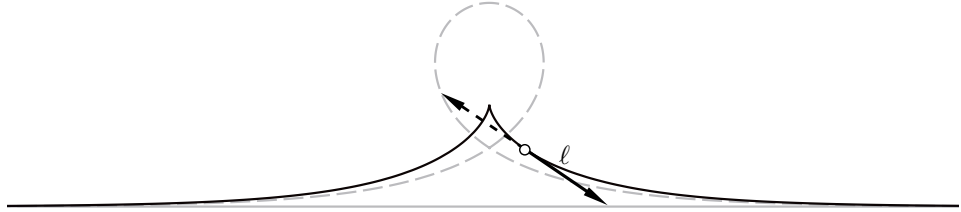}
\caption{A tractrix: the back wheel track (dark solid curve) when the  front wheel travels along a straight line (light solid  horizontal line). The `flipped' front track is Euler's soliton (light dashed curve).}
\label{fig:tractrix}
\end{figure}

\begin{lemma}\label{lemma:tractrix}
Let $\gamma(t)=(\b(t), \f(t))\in Q$ be a horizontal lift of the straight line $\f(t)=(t,0)$. Then either $\b(t)=(t\pm\ell, 0)$ or $\b(t)$ is a tractrix of width $\ell$. Explicitly, 
$$\b(t)=\left(t-\ell \tanh \left[(t-t_0)/\ell\right],\ell\sech\left[(t-t_0)/\ell\right]\right),\quad  t_0\in\R.$$
The associated Euler soliton, obtained via the involution $\Phi$,  is 
$$\tilde\f(t)=2\b(t)-\f(t)=\left(t-2\ell \tanh \left[(t-t_0)/\ell\right],2\ell\sech\left[(t-t_0)/\ell\right]\right).
$$
\end{lemma}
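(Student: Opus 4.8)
The plan is to read the two conditions defining a horizontal lift as a first–order ODE for $\b(t)$ and integrate it explicitly. Write $\b(t)=(u(t),v(t))$. The constant frame–length condition is $(t-u)^2+v^2=\ell^2$, and the no-skid condition says $\dot\b=\lambda\,(\f-\b)$ for some scalar function $\lambda(t)$, i.e. $\dot u=\lambda(t-u)$ and $\dot v=-\lambda v$. First I would pin down $\lambda$ by differentiating the length constraint, $(t-u)(1-\dot u)+v\dot v=0$; substituting the no-skid relations collapses the left side using $(t-u)^2+v^2=\ell^2$ and gives $\lambda=(t-u)/\ell^2$. This is exactly the classical ODE defining the tractrix, namely the trailing endpoint of a rigid segment whose leading endpoint is dragged along a line, so the identification of $\b$ as a tractrix is built in from the start.

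Next I would reduce to a single scalar equation by setting $w:=t-u$, the horizontal component of the frame vector $\f-\b$. From $\dot u=w^2/\ell^2$ one obtains the separable autonomous equation $\dot w=1-w^2/\ell^2=(\ell^2-w^2)/\ell^2$. Its constant solutions $w\equiv\pm\ell$ force $v\equiv 0$ and hence $\b(t)=(t\mp\ell,0)$, the degenerate case in which the back wheel rides along the same line (the back wheel momentarily, hence permanently, lies on the front track). Every non-constant solution is $w=\ell\tanh[(t-t_0)/\ell]$ for some $t_0\in\R$, and then the length constraint gives $v^2=\ell^2-w^2=\ell^2\,\mathrm{sech}^2[(t-t_0)/\ell]$, so $v=\pm\ell\,\mathrm{sech}[(t-t_0)/\ell]$, the sign being a reflection about the $x$-axis, so we may take $+$. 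This is precisely the asserted tractrix, whose $v$-coordinate ranges over $(0,\ell]$, giving width $\ell$.

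Finally I would substitute into $\tilde\f=2\b-\f$ to read off the stated formula $\tilde\f(t)=\bigl(t-2\ell\tanh[(t-t_0)/\ell],\,2\ell\,\mathrm{sech}[(t-t_0)/\ell]\bigr)$ directly. To justify naming $\tilde\f$ an Euler soliton I would verify the curvature condition: a short hyperbolic computation shows that $\|\dot{\tilde\f}\|\equiv 1$ (so $t$ is already arclength) and that $\tilde\kappa=\dot{\tilde x}\ddot{\tilde y}-\dot{\tilde y}\ddot{\tilde x}=(2/\ell)\,\mathrm{sech}[(t-t_0)/\ell]$; inserting this into the energy form \eqref{eq:energy_form} yields $B=0$ and $A=-1/\ell^2<0$, which are the soliton parameters, matching Proposition \ref{prop:kappaeq} at $a=1$ when $\ell=1$. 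Alternatively, since $\Phi$ is a \sR isometry by Lemma \ref{flip} and the linear lift $\gamma$ is a metric line (hence a geodesic) as noted just before the statement, $\tilde\gamma=\Phi\circ\gamma$ is a geodesic whose front track is an elastica by Theorem \ref{thm:main1}, and the curvature formula then identifies it as the soliton.

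The main obstacle: nothing here is deep, but the one place demanding care is the final identification of $\tilde\f$ as a soliton. The hyperbolic-function bookkeeping in the curvature computation, and the cancellations that make $\|\dot{\tilde\f}\|\equiv 1$, must be carried out correctly, and one must confirm that the parameters $(A,B)=(-1/\ell^2,0)$ are genuinely the soliton's rather than merely those of some non-inflectional elastica; appealing to Proposition \ref{prop:kappaeq} with $a=1$ removes any ambiguity.
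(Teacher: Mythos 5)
Your proof is correct, but it takes a different route from the paper's. The paper's proof is a two-line affair in the coordinates $(x,y,\theta)$ of Section \ref{sec:bge}: for the front track $\f(t)=(t,0)$ the contact condition of Lemma \ref{lemma:contact} collapses to the single scalar ODE $\ell\dot\theta+\sin\theta=0$ for the frame angle, whose non-constant solutions are $\theta(t)=-2\cot^{-1}\bigl(e^{(t-t_0)/\ell}\bigr)$, and $\b=\f-\ell(\cos\theta,\sin\theta)$ then yields the stated formulas. You instead work directly with the Cartesian coordinates of the back wheel, recover $\lambda=(t-u)/\ell^2$ from differentiating the length constraint, and reduce to the autonomous equation $\dot w=(\ell^2-w^2)/\ell^2$ for $w=t-u$; this is the classical derivation of the tractrix as the ``dragged endpoint'' curve and is equivalent in content (your $w=\ell\tanh$ is $-\ell\cos\theta$ in the paper's variables), at the cost of slightly more bookkeeping. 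The one genuine addition in your write-up is the explicit verification that $\tilde\f=2\b-\f$ is unit-speed with curvature $\tilde\kappa=(2/\ell)\,\mathrm{sech}[(t-t_0)/\ell]$, hence satisfies \eqref{eq:energy_form} with $B=0$, $A=-1/\ell^2<0$, $\kappa\neq 0$, which is exactly the paper's definition of an Euler soliton; the paper's proof simply says ``from which the statement follows'' and leaves this identification implicit (it is also recoverable from Proposition \ref{prop:kappaeq} at $a=1$, as you note). Two trivial points to tidy: the sign choice $v=+\ell\,\mathrm{sech}$ discards the reflected tractrix, which is consistent with the statement's convention but worth flagging, and the dichotomy between $w\equiv\pm\ell$ and $|w|<\ell$ for all time should be attributed explicitly to uniqueness for the autonomous ODE together with the constraint $|w|\le\ell$.
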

\begin{proof}
From Lemma \ref{lemma:contact}, the horizontality condition on $\gamma$ is 
$\ell\dot\theta+\sin\theta=0$.  The general solution of this ODE  is $\theta(t)=-2 \cot ^{-1}\left(e^{(t-t_0)/\ell}\right)$, provided
that  $-\pi < \theta(0) < 0$,  from which the statement follows. 
(To get the solutions with  $0 < \theta(0) < \pi$ note that the equation is invariant under $\theta \to - \theta$.) 
\end{proof}

\begin{remark} Note that the tractrix   ${\bf b}(t)$ of Lemma  \ref{lemma:tractrix} tends to the earlier solutions  $(t \pm \ell, 0)$, as the `phase parameter'  $t_0 \to \pm \infty$.
\end{remark}

 We continue with the proof of the `if' part of Theorem \ref{thm:main2}. Let $c:\R\to\R^2$ be an  arc length parametrization of a straight line and  
 $\gamma: \R \to Q$  any horizontal lift of $c$.   As discussed immediately above, any horizontal  lift of a metric line must be a metric line,
hence  $\gamma$ is a metric line in $Q$.  The back track of such a lift is either a straight line or a tractrix of width $\ell$. In case the back track is a tractrix apply $\Phi$ to $\gamma$ and project back to   arrive at $\tilde c= \pi_f \circ \Phi \circ \gamma$.  
By the last lemma  any such  $\tilde c$ is an  Euler soliton of width  $2 \ell$. 
Since  isometries map metric lines to metric lines the curve  $\tilde\gamma=\Phi \circ \gamma$ is a metric line and so  
the Euler soliton  $\tilde c$ is the projection of a   metric line.  \qed

\mn 

Note that we can construct any Euler soliton of width $2 \ell$ in this way.  

\mn{\bf The rest of the proof of Theorem \ref{thm:main2} (the `only if' part).} We have just proven  that all infinite geodesics in $Q$ whose front tracks are straight lines or Euler solitons of width $2 \ell$ are metric lines.
To prove that there are no other metric lines in $Q$ we  invoke Theorem \ref{thm:main1}.
According to this theorem it  suffices to eliminate all the  non-inflectional  elasticae, other then the Euler soliton, as front tracks of metric lines in $Q$. 
Our proof follows the idea suggested by  Figure \ref{fig:conj}. Given any non-inflectional elasticae $\f(t)$, other then the Euler soliton, where $t$ is arc-length, we can rigidly rotate it so that its directrix is horizontal, that is, 
 $\f(t) = (x(t), y(t))$, where  $y(t)$ is periodic of some period $T > 0$ and  
 $x(t+T) = x(t)+L$ for some $L>0$.
 By translations in $x,y$ and $t$ we can further  assume that $\f(0)=(0,0)$ is a vertex of maximum curvature, so that  $x(0) = y(0) = 0$ and $\theta(0)=\pi/2$. (There are explicit expressions for $x(t)$ and $y(t)$ in terms of elliptic functions
 but we will not need these.)  
 
 Thus, after one period we have $\f(T) = (L,0)$.  Because $t$ is arc-length, the length of the elastica segment  between $\f(0)$ and $\f(T)$ is $T$.
 But  a straight horizontal line segment is the shortest curve connecting $\f(0)$ to $\f(T)$ and its length is $L$, so we must have that   $L<T$.

 \begin{figure}[ht]
\centering\def\svgwidth{\textwidth}\import{./figures/}{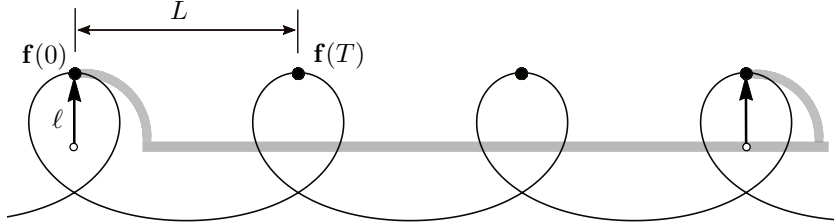}
\caption{A shortcut.}
\label{fig:shortcut}
\end{figure}
 
 After $N$ periods the length of the elastica segment between $\f(0)$ and $\f(NT)$  is $NT$. As shown in Figure \ref{fig:shortcut}, we can find a shorter bike path between $\gamma(0)$ and  $\gamma(NT)$ for $N$ large enough, as follows: ride along a quarter circle of radius $\ell$ clockwise without moving the  back wheel; then ride along a straight line eastwards a distance of $NL$, then a quarter turn counterclockwise. The total length of this path is  $\pi\ell+NL$. For $N>\pi \ell/(T-L)$ this is shorter that $NT$. \qed

 \begin{remark}

Assuming Theorem \ref{thm:main1},   Theorem \ref{thm:main2} follows from the results described in  \cite{Sachkov3} where the  \sR geodesics for a \sR metric  on $\SEt$   isometric to our metric on $Q$
were   studied and   characterized as   solutions $\theta(t)$ to a family of pendulum equations.  Back in  our problem, that angle  $\theta$ is the angle  the bike frame makes with the $x$-axis.    
 In  \cite{Sachkov3} it was  proved that    a geodesic minimizes for all time  if and only if  $\theta(t) \equiv \mathrm{const}$ or $\theta(t)$ is a  non-periodic homoclinic solution of the pendulum problem. 
These conditions mean   that the front track of the bike moves along a straight line or is    an Euler soliton. 
\end{remark}

 \section{Loose Ends and Scattered Wheels}

 \subsection{Bicycling Correspondence}
 In the first part the proof  of Theorem \ref{thm:main2} (the ``if'' part) we build the Euler soliton out of  a Euclidean line by using the tractrix back-wheel curve
 as an intermediary step.   In the language of \cite{biking}, the  line and the Euler soliton are   in ``bicycle correspondence'' with each other,    the   tractrix  mediating
 the correspondence.    Take any sufficiently smooth  front wheel curve ${\bf f}(t)$.  Choose any one of its horizontal lifts $\gamma (t) = ({\bf b} (t),  {\bf f} (t))$.
 There are a circle's worth of such lifts, corresponding to an initial choice of point  ${\bf b} (t_0)$ on the circle of radius $\ell$ about ${\bf f} (t_0)$. 
 Apply the `flip' isometry $\Phi$ of Lemma \ref{flip}  to $\gamma$. Project $\Phi \circ \gamma$   back to the plane to arrive at  the  new front wheel curve $\tilde\f(t)=2\b(t) - \f(t),$
 which shares   its back wheel track ${\bf b} (t)$ with $\gamma(t)$.  
 Then the two  front wheel curves ${\bf f} (t)$ and $\tilde\f (t)$  are said to be in  bicycle correspondence. There are thus a circle's worth of bicycle correspondents to 
 ${\bf f} (t)$, corresponding to the circle's worth of choices for ${\bf b} (t)$.  
 
 \mn {\bf Question. }  Is a  bicycle correspondent to a projected geodesic always  a  projected geodesic?
 
 \mn {\bf No.}  The circle of radius $\ell$ is the projection of a geodesic corresponding to a back track fixed at this circle's center.
Most   bicycle correspondents of the  circle 
 are not elastica  and hence not projections of \sR geodesics.  
 It is interesting to note that these correspondents to the circle  are, instead,  {\it pressurized elasticae} which means their curvature
 $\kappa$ satisfies the ODE $\ddot \kappa +\frac{1}{2} \kappa ^3+ A \kappa = C$
 with a nonzero constant $C$. See Figure \ref{fig:pressurized}.

 \begin{figure}[ht]
\centering
\includegraphics[width=\linewidth]{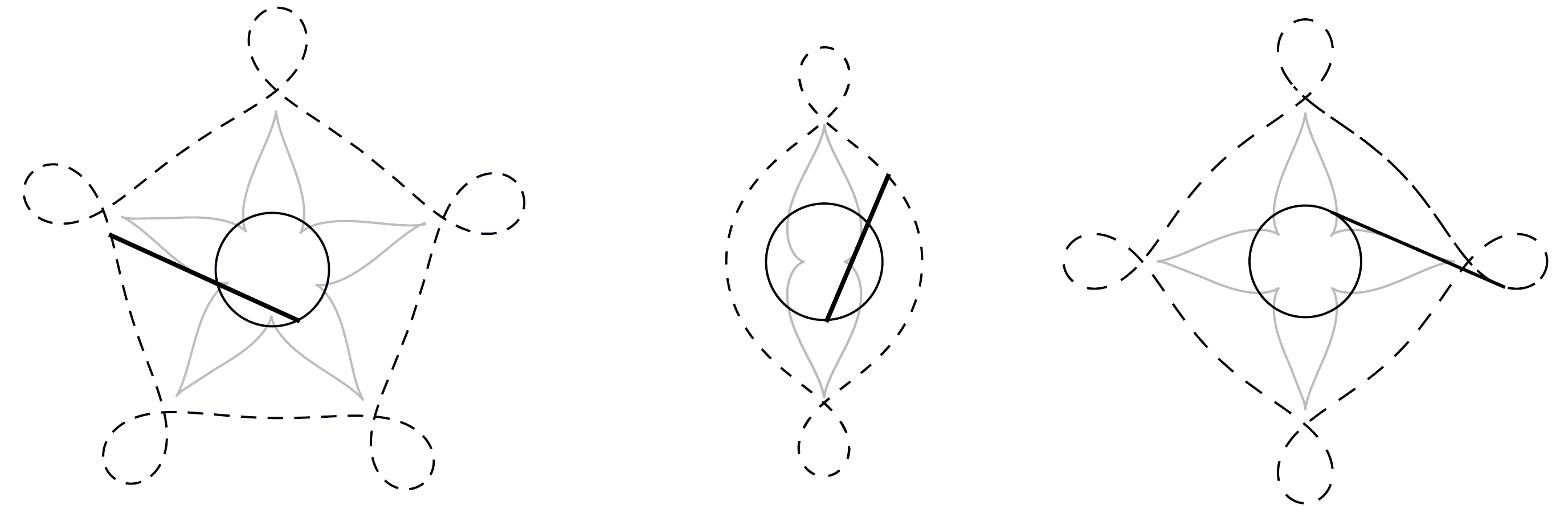} 
\caption{Pressurized elasticae (dashed curves), in bicycle correspondence with a circle, sharing  a common back track (light curve).}
\label{fig:pressurized}
\end{figure}
  
 \mn {\bf Question. } Is every  horizontal lift  of a projected geodesic  a geodesic?  
 
 \mn {\bf No.}    We just saw this above with the case of the circle.  Alternatively, see Claim 3 of Section \ref{sec:geod}.

  \subsection{Not of bundle type }

For the most familiar   \sR  submersions $M\to B$ the answer to the preceding question is {\bf yes:}  
every horizontal lift of every projected geodesic is a geodesic.   Examples include the  
 Heisenberg group, Carnot groups $G$ with $B = G/[G, G]  $, the Hopf fibration examples $S^3 \to S^2$ and the various
 principal bundle examples in \cite{tour}.     What makes these
 geometries different from bicycling geometry, group-theoretically speaking, is that for them the group of \sR isometries acts transitively on each fiber. 
  \begin{definition}  A \sR manifold $M$ is {\em of bundle type} if it admits a \sR submersion $\pi: M \to B$ and a Lie subgroup  $H \subset \Isom(M)$
 such that the fibers of $\pi$ are orbits of $H$. 
 \end{definition} 
 If $M \to B$ is of bundle type then,  necessarily,  every horizontal lift of a projected geodesic is a  geodesic. 
 So, our  bicycling \sR geometry with its front track projection $\pi_f:Q\to\R^2$ cannot  be  of bundle type. 

The  front track submersion  is a principal $S^1$-fibration, so that its   fibers are the orbits of a free $S^1$-action on $Q$, but this action cannot be  an action by isometries,
as we have just seen.  To see this fact directly,   fix a base point $\q_0\in Q$. Identify $\SEt\simeq Q$, $\q_0\mapsto g\q_0$. Then the induced \sR structure on $\SEt$ is invariant under {\em left}-translations by $\SEt$, while $\pi_f:\SEt\to \R^2$ is the quotient by  {\em right}-translations by  $S^1\subset\SEt$, the rotations about  $\pi_f(\q_0)\in \R^2$. 
 
  \begin{remark}In fact, this $S^1$-action  is  not even by  contact symmetries: right translation  $R_g$ by  an element 
 $g\in S^1$  defines a map of $\SEt$ which does not  preserve the contact distribution $D$. This failure  is easily seen by observing that $R_g$ acts on a bike path in $Q$  (a $D$-horizontal curve) by rotating the bike frame along the  path by a fixed angle, without changing the front track, producing `skidding' of the back wheel. 
 \end{remark}

\subsection{Other models for bicycling geometry } 
\label{sec:models}

The bicycling configuration space $Q$ can be identified, $\SEt$-equivariantly,  with  $ST \R^2$,  the space of unit tangent vectors to the plane.
Write elements of $ST \R^2$   as pairs $({\bf b}, {\bf v})$ where ${\bf b} \in \R^2$
and ${\bf v} \in \R^2$ is a unit vector attached at the point ${\bf b}$.  Identify ${\bf b}$ with the location of the back wheel
and ${\bf v}$ with the direction of the frame.  Then the isomorphism $ST \R^2 \to Q$ is  
\begin{equation}    ({\bf b}, {\bf v}) \mapsto ({\bf b}, {\bf f}),  \text { with } {\bf f}  = {\bf b} + \ell {\bf v}
\label{iso} 
\end{equation} 

The induced  contact distribution on $ST \R^2$, also denoted as $D$,  can be  described  by the condition that its smooth  integral curves $({\bf b} (t), {\bf v} (t))$ satisfy  $\dot\b(t)  \in \R\v(t) .  $
Write ${\bf b} = (x_b, y_b)$ and  ${\bf v} = (\cos \theta, \sin \theta),$ to define global coordinates  $(x_b, y_b, \theta)$ on $ST \R^2$.
In these coordinates a smooth    curve  $(x_b (t), y_b (t), \theta (t))$ is horizontal if and only if there is a smooth scalar function $\lambda (t)$ such that 
$\dot x_b = \lambda  \cos \theta ,
\dot y_b = \lambda  \sin\theta  $.     Eliminating $\lambda$, the contact distribution $D$ is given by the vanishing of the  contact form
\begin{equation}\label{eq:contact}
 (\sin\theta) d x_b - (\cos \theta) d y_b.
 \end{equation}

The   vector fields  
\begin{equation}
\bfS = \text{`straight ahead'}= (\cos\theta) \partial_{x_b}  + (\sin\theta)\partial_{y_b}
\label{straightahead} 
\end{equation}
and
\begin{equation}
\TT = \text{ ``turn''} =\partial_{\theta} 
\label{turn} 
\end{equation}
 clearly  frame  $D$.\footnote{The notation $\partial_\theta$  stands here  for a different  vector field from the one  in equations \eqref{eq:X12}, because we used  there  different coordinates, $(x,y,\theta)$. }
An  integral curve of  $\bfS$  corresponds to the bicycle moving along a straight line passing through the bike frame.   
An integral curve of $\TT$  corresponds to a  circus trick: the back wheel is stationary, 
marking  the center of a circle about which  the front wheel traces a circle of radius $\ell$.  To do this trick, the front wheel must be turned
at 90 degrees to the frame.    The front
wheel tracks of this line and a circle  are orthogonal.   By basic geometric considerations we see that 
$$ \langle \bfS, \bfS \rangle =1,  \quad \langle \bfS, \TT \rangle = 0, \quad \langle \TT, \TT \rangle = \ell^2.$$

\mn {\bf A second proof of Lemma \ref{flip}.}
Consider  the map  $\tilde \Phi (x_b, y_b, \theta) = (x_b, y_b, \theta + \pi)$ on $ST \R^2$.
One computes    $\tilde \Phi_* \bfS  = -\bfS,  \tilde \Phi_* \TT  = \TT$ which shows  that  $\tilde \Phi$  is a \sR {\em isometry}.  In
terms of our $({\bf b}, {\bf v})$   representation of $ST \R^2$ we have
 \begin{equation}
\label{flip2}
\tilde \Phi (({\bf b}, {\bf v})) =  ({\bf b}, - {\bf v}). 
\end{equation} 
Rewritten,  using   the isomorphism   (\ref{iso}), the  map (\ref{flip2})  becomes 
the map $ \Phi : Q \to Q$ of the lemma.\qed

\subsection{Bicycle parallel transport and hyperbolic geometry}
\label{subsec:bike transport}
Associated to a  \sR submersion $\pi: M \to B$ and a path   $c: I \to B$
we have a parallel transport map.  If the initial and final endpoints of $c$
are $f_0$ and $f_1$ then this is a map  $\Psi: \pi^{-1} (f_0) \to \pi^{-1} (f_1)$.  

\mn{\bf Question.} Is parallel transport for bicycling an isometry between fibers?

\mn{\bf No.}  One way to see this is via the following theorem. 
\begin{theorem} [Foote \cite{Foote}]
\label{foote}  The parallel transport map for bicycling  is a linear fractional transformation  of $S^1$.  Every  linear fractional transformation
can be obtained by parallel transport along  some closed  curve.    
\end{theorem}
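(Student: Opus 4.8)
The plan is to reduce the parallel transport map to the flow of a Riccati equation and then invoke the classical fact that Riccati flows are projective (linear fractional). Fix a front track $c(t)=\f(t)=(x(t),y(t))$ with $\f(0)=f_0$ and $\f(1)=f_1$, and coordinatize each fiber $\pi_f^{-1}(f_0)$, $\pi_f^{-1}(f_1)$ by the frame angle $\theta\in S^1$. By Lemma \ref{lemma:contact}, a horizontal lift $(\b(t),\f(t))$ satisfies the bicycling ODE $\ell\dot\theta=\dot y\cos\theta-\dot x\sin\theta$, and this ODE is exactly what transports $\theta(0)\mapsto\theta(1)$. First I would pass to the stereographic coordinate $z=\tan(\theta/2)$, which identifies $S^1$ with the real projective line $\R\mathrm{P}^1=\R\cup\{\infty\}$. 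A direct substitution turns the bicycling ODE into the Riccati equation
\[
\dot z=\frac{1}{2\ell}\bigl(-\dot y\,z^2-2\dot x\,z+\dot y\bigr).
\]

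Second, I would linearize. Setting $z=P/Q$ converts the Riccati equation into the linear system $\tfrac{d}{dt}\binom{P}{Q}=A(t)\binom{P}{Q}$ with traceless coefficient matrix $A(t)=\tfrac{1}{2\ell}\left(\begin{smallmatrix}-\dot x&\dot y\\ \dot y&\dot x\end{smallmatrix}\right)\in\mathfrak{sl}_2(\R)$. Its time-one flow is a matrix $g\in\mathrm{SL}_2(\R)$ whose induced action on $z=P/Q\in\R\mathrm{P}^1$ is precisely the associated fractional linear map. Since $z=\tan(\theta/2)$ is stereographic projection, this shows that parallel transport is a linear fractional transformation of $S^1$, establishing the first assertion.

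For the realization statement I would read the linearized system as a right-invariant control system $\dot g=\bigl(\dot x(t)\,E_1+\dot y(t)\,E_2\bigr)g$ on $\mathrm{SL}_2(\R)$, where $E_1=\tfrac{1}{2\ell}\left(\begin{smallmatrix}-1&0\\0&1\end{smallmatrix}\right)$ and $E_2=\tfrac{1}{2\ell}\left(\begin{smallmatrix}0&1\\1&0\end{smallmatrix}\right)$, and the velocity components $\dot x,\dot y$ are free controls (the front track may be chosen with arbitrary velocity). A one-line bracket computation gives $[E_1,E_2]$ a nonzero multiple of the rotation generator $\left(\begin{smallmatrix}0&-1\\1&0\end{smallmatrix}\right)$, so $E_1,E_2$ bracket-generate $\mathfrak{sl}_2(\R)$. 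The admissible control set $\mathrm{span}\{E_1,E_2\}$ is symmetric (indeed linear), so by the standard controllability theorem for symmetric bracket-generating right-invariant systems on a connected Lie group the attainable set from the identity is all of $\mathrm{SL}_2(\R)$. Projecting, every element of $\PSLt$, i.e.\ every orientation-preserving fractional linear transformation of $S^1$, is realized as the parallel transport of some front track.

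The routine part is the first paragraph (the substitution producing the Riccati equation) together with the classical Riccati-is-projective fact. I expect the main obstacle to be the realization statement: verifying that the two-dimensional family of available generators bracket-generates all of $\mathfrak{sl}_2(\R)$, and then correctly invoking a controllability result to upgrade ``bracket-generating'' to ``attains the whole connected group,'' while keeping track of the orientation conventions so that the realized group is exactly $\PSLt$.
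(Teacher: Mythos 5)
Your argument is correct, and it is worth noting that the paper itself gives no proof of this theorem: it is quoted from Foote's work \cite{Foote} (see also \cite{biking}), so there is nothing internal to compare against line by line. Your route is the standard one for the first assertion: the substitution $z=\tan(\theta/2)$ does turn the no-skid equation $\ell\dot\theta=\dot y\cos\theta-\dot x\sin\theta$ into the Riccati equation you display (the signs and the traceless matrix $A(t)=\tfrac{1}{2\ell}\bigl(\begin{smallmatrix}-\dot x&\dot y\\ \dot y&\dot x\end{smallmatrix}\bigr)$ check out against Lemma \ref{lemma:contact}), and Liouville's formula puts the fundamental solution in $\mathrm{SL}_2(\R)$, so the transport is the projective action of an $\mathrm{SL}_2(\R)$ element on $\R\mathrm{P}^1\simeq S^1$. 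Where you genuinely diverge from the literature is the realization half: Foote and \cite{biking} obtain surjectivity onto $\PSLt$ by explicitly composing the transports of line segments and circular arcs (which generate enough one-parameter subgroups), whereas you invoke bracket generation of $\{E_1,E_2\}$ in $\mathfrak{sl}_2(\R)$ plus controllability of symmetric right-invariant systems. Your computation $[E_1,E_2]=\tfrac{1}{2\ell^2}\bigl(\begin{smallmatrix}0&-1\\1&0\end{smallmatrix}\bigr)$ is right, and the Chow--Rashevskii/Jurdjevic--Sussmann argument is a legitimate (if less explicit) way to conclude; it also correctly accounts for the fact that only the orientation-preserving transformations, i.e.\ exactly $\PSLt$, can arise, since the transport is a time-one flow on the circle. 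The only point to keep in mind is a mild regularity remark: the controllability theorem produces piecewise-constant controls, i.e.\ piecewise-linear front tracks, which is harmless here but should be said if one insists on smooth curves.
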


 There is no metric on the circle for which the   the  group $\PSLt$ of linear fractional transformations acts by isometries,
so Foote's  theorem implies the {\bf  `no'} answer above. 
Again, if $M$ were of bundle type then  the answer to the above question would be {\bf yes:} parallel transport would be  an $H$-map
and hence an isometry.

Let us say a few words about what   parallel transport involves for   bicycling.  
Fix a front path $c$ joining two  front wheel locations ${\bf f}_0, {\bf f}_1$ in the plane.   The fiber $\pi_f ^{-1} ({\bf f}_0)$ is the circle of radius $\ell$ centered
at ${\bf f}_0$.  The points of this circle represent all ways of placing the back
wheel before   bicycling the front wheel  along the path $c$.     Choosing one such placement ${\bf b}_0 \in \pi_f ^{-1} ({\bf f}_0)$ leads to
a unique horizontal lift  $\gamma$ of $c$ starting at $\gamma(0) = ({\bf b}_0, {\bf f}_0)$. 
Here we assume that $c$ is parametrized by the unit interval $[0,1]$.   
Writing $\gamma(t) = ({\bf b}(t), c(t))$,  we have that the parallel
transport of  ${\bf b}_0$ along $c$ is 
${\bf b}(1)$, which is an element in the circle of radius $\ell$ about ${\bf f}_1$.    
So parallel transport, or holonomy, along $c$ is a diffeomorphism
between two circles, one centered at ${\bf f}_0$, the other at ${\bf f}_1$.   
 Use  translation and scaling  to identify each  circle    with the standard unit circle so that this  parallel transport becomes a map of the standard unit circle to itself.  
 Foote's Theorem above might be called  the `first theorem' of   `bicycling mathematics'.  See \cite{biking}.

Although the group  $\PSLt$ does not act isometrically on the circle,   it does act isometrically on
the hyperbolic plane. In fact $\PSLt$   equals  the group of rigid motions  of that plane.   
The configuration space for rolling a  hyperbolic plane on the Euclidean plane can be identified with 
$M:= \PSLt \times \R^2$ and   inherits, in a canonical way, a rank 2 \sR geometry such that the projection onto  $\R^2$
is a \sR submersion of bundle type. 
 \cite{Jurdjevic1, Jurdjevic2} prove that  its  geodesics project to  planar elasticae, both inflectional and non-inflectional.  

\subsection{A heuristic proof of Theorem 1.1.}  
\label{heuristics}
 Our  bicycle configuration space $Q$ can be identified with the circle bundle associated  to $M \to  \R^2$, where the structure group  $\PSLt$ 
 acts on the circle by fractional linear transformations, as per Foote's theorem.   
 Associated with any  plane curve $c:I \to \R^2$ we have its   hyperbolic rolling parallel transport,  an element  
 $k = k(c) \in \PSLt$ acting by left multiplication on the fibers. 
  The projection to $\R^2$  of a \sR  geodesic on   $M$   solves  the following {\it iso-holonomic  problem} (see \cite{Isohol} and   \cite{tour}, Chapter 11, especially  Theorem 11.8):
     among all plane curves $c$
 connecting given points  ${\bf f}_0$ to ${\bf f}_1$  and having a fixed hyperbolic transport $k = k(c) \in \PSLt$, find the shortest.     Now  
 imagine fixing the bicycle placement as well as the front wheel locations,
 which is to say, let us  fix  back  
   wheel locations  ${\bf b}_0, {\bf b_1}$,   writing  them as $\b_i = \f_i + \ell \v_i$.   Recall that  $k \in \PSLt$ acts on the unit circle. 
   Now, it may or may not be true that   $k ({\bf v_0}) = {\bf v_1}$.
   If not, let $k$ vary. Consider all $k \in \PSLt$ satisfying  $k ({\bf v}_0) =  {\bf v}_1$.
   For any such $k$ form the corresponding hyperbolic rolling geodesic $c$ for which $k(c) = k$.   Now, minimize
 the lengths of all such  $c$'s over all of  the $k$'s satisfying the condition that they take  ${\bf v}_0$ to ${\bf v}_1$. 
  The curve achieving this minimizer will
 be the   front wheel projection  of a   bike geodesic minimizing the length between $({\bf b}_0, {\bf f}_0)$ and $({\bf b}_1, {\bf f}_1)$, and will also be itself
 a particular type of hyperbolic rolling geodesic.   Since we know by  \cite{Jurdjevic1, Jurdjevic2} that hyperbolic rolling geodesics project to  
  elasticae, we're done!
 
 What makes this proof heuristic?  For one thing, the set of $k$'s over which we're minimizing  is a non-compact set, so we have no   guarantee that the  minimum exists.
 For another thing, the proof does not  single out the non-inflectional elasticae from all elasticae.

\subsection{Open Questions} The  bicycle correspondents  of a curve $c$ are the  result of the compositions  
$c \to \pi_f \circ  \Phi  \circ hc $, where $hc$ indicates  any of the circle's worth of horizontal
lifts of  the  front track $c$ and  where $\Phi$ is the  flipping isometry of Lemma \ref{flip}.  
  There are a number of hints in \cite{biking} that the  `transformation' of forming
bicycling correspondents  shares much in common with the B\"acklund transformations arising in the theory of integrable PDE.  

What is the family of curves that we get by forming the bicycle correspondents of elastica?    Repeat
and form  all the bicycle correspondents of  all the curves in this new family.  What do we get now?  Let us call this set the   `2nd generation' of correspondents
to elastica.  Keep going.  Does the procedure eventually close up,
or, do we get new curves at each generation?      Do the curves at the $n$-th generation  satisfy some ``nice'' ODE?
Are they  projections of   \sR geodesics on some \sR geometry  constructed  iteratively from $Q$ or  $\PSLt \times \R^2$? 

\mn 

We could  also ride our  bicycle   
on a sphere or    hyperbolic plane.  This change of bicycling arena corresponds to investigating a   left-invariant \sR structure on either  $\SOt$ or $\PSLt$,
these being the  unit tangent bundles  and also the group of rigid motions  of the sphere or hyperbolic plane, respectively.  (When bicycling on the sphere of radius $R$
one may need to insist that the frame's length is not equal to an integer multiple of $R\pi/2$
  to avoid various pathologies.  See \cite{Arnold2}  for interesting relations
   that might arise between front and back wheel curves when the spherical frame  length is  $R \pi/2$.) 
   How would our two  main theorems change?  Are the front wheel projections of  \sR   geodesics
still   elastica, meaning curves whose geodesic curvatures satisfy equation \ref{eq:energy_form}?  We guess so,  but   have not 
checked and are open to  surprises.  Would   bicycling on these non-Euclidean geometries   add to our understanding of how (or if)   these
different occurrences  of elastica in \sR geometry are related? Perhaps.

\begin{appendix}

\section{Proof of Theorem \ref{thm:isometries} on Isometries}

The group $\SEt$ of   orientation preserving isometries of the euclidean plane   acts freely and transitively by \sR isometries on the bicycling configuration space $Q$. Fixing a point $\q_0\in Q$,  we identify $\SEt\simeq Q$, $g\mapsto g\cdot \q_0$. This identification is $\SEt$-equivariant, hence induces a left-invariant \sR structure on $\SEt$,  given by its value at the identity $e\in \SEt$, a 2-dimensional subspace $D_e\subset\set,$ equipped with an inner product.

 To determine the isometry group $\Isom(\SEt)$ of this \sR structure we use two ingredients: (1) Cartan's equivalence method,  applied to the local classification  of 3-dimensional \sR manifolds of contact type; (2) A  calculation of $\Aut(\set,D_e)$, the group of automorphism of the Lie algebra of $\SEt$ preserving the contact plane at $e\in \SEt$ and the inner product. 

\mn (1) Let $M$ be a 3-dimensional \sR manifold of contact type (that is, $D\subset TM$ is bracket generating).  Similar to the  Riemannian case, one can use the Cartan method of equivalence to construct  a canonical connection on $TM$ and associated curvature tensor, whose vanishing is equivalent to  $M$ being  `flat', that is, locally isometric to the maximally  symmetric case, the \sR structure induced on $S^3$ from $S^2$ via the Hopf fibration $S^3\to S^2$, admitting  a 4-dimensional isometry group  (the standard  action of $\mathrm{U}_2$ on $\C^2\supset S^3$).  In the non-flat case, such as ours, the equivalence method shows that the isometry group, even the local one, is  at most 3-dimensional. It follows that  the space $\isom(M)$ of \sR Killing fields (vector fields whose flow acts by \sR isometries) is at most 3-dimensional.  A good reference for this circle of ideas is \cite{Keener}. 

\mn 

Now let $G$ be a   3-dimensional connected Lie group  with a left-invariant non-flat \sR structure  of contact type.  Let $\g=T_eG$ be its Lie algebra, equipped with the Lie bracket coming from the commutator of left-invariant vector fields. Let $L(G)\subset\Isom(G)$ be the (isomorphic)  image of the action of $G$ on itself by left translations. Then, by part (1)  above, $\dim[\Isom(G)]=3$. Let $\mathfrak{R}(G)$ be the 
right-invariant vector fields on $G$. They  generate left translations, hence $\mathfrak{R}(G)\subset\isom(G).$ But $\dim[\isom(G)]\leq 3,$ so $\mathfrak{R}(G)=\isom(G).$
Let  $\Isom_e(G)$ be the stabilizer of $e$ in $\Isom(G),$ a discrete subgroup. 

\begin{lemma}   $\Isom(G)=L(G)\rtimes \Isom_e(G)$. That is, $ L(G)$ is a normal subgroup of  $\Isom(G),$ $ L(G)\cap\Isom_e(G)=\{e\}$
and $\Isom(G)=L(G) \Isom_e(G)$. 

\end{lemma}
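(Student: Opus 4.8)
The goal is to establish the semidirect product decomposition
$\Isom(G)=L(G)\rtimes\Isom_e(G)$
for a three-dimensional connected Lie group $G$ carrying a left-invariant, non-flat \sR structure of contact type. The plan is to verify the three defining properties of an internal semidirect product: that $L(G)\cap\Isom_e(G)$ is trivial, that $\Isom(G)=L(G)\cdot\Isom_e(G)$, and that $L(G)$ is normal in $\Isom(G)$. Crucially, I would exploit the dimension count already established just above the statement, namely that $\Rf(G)=\isom(G)$ and hence $\dim[\Isom(G)]=3=\dim[L(G)]$, which forces $L(G)$ to be exactly the identity component $\Isom(G)^0$.

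**The three properties.**
First I would dispose of the trivial intersection: an element of $L(G)\cap\Isom_e(G)$ is a left translation $L_g$ fixing the base point $e$, but $L_g(e)=g$, so $g=e$ and the intersection is $\{e\}$. Second, for the product decomposition, take any $\phi\in\Isom(G)$ and set $g:=\phi(e)$; then $L_g^{-1}\circ\phi$ fixes $e$, so it lies in $\Isom_e(G)$, giving $\phi=L_g\circ(L_g^{-1}\circ\phi)\in L(G)\cdot\Isom_e(G)$. Both of these are elementary and essentially formal once one knows $L(G)$ acts transitively (which it does, since $G$ acts transitively on itself by left translation). The content is therefore concentrated in the third property, the normality of $L(G)$.

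**Normality — the main obstacle.**
The hard part is showing $L(G)\trianglelefteq\Isom(G)$. The cleanest route is to identify $L(G)$ with the identity component $\Isom(G)^0$, since the identity component of any topological group is automatically normal. I would argue as follows: $L(G)$ is the image of the connected group $G$ under a continuous injective homomorphism, so it is connected and contains $e$; thus $L(G)\subseteq\Isom(G)^0$. For the reverse inclusion, the dimension count $\dim[\Isom(G)]=3=\dim[L(G)]$ shows these groups have the same dimension, and since $\Isom(G)^0$ is connected of dimension $3$ and contains the $3$-dimensional connected submanifold $L(G)$, we get $\Isom(G)^0=L(G)$. Here I would need the standard fact that $\Isom(G)$ is a Lie group (a consequence of the \sR isometry group of a contact sub-Riemannian manifold being a Lie group, which follows from the Cartan-equivalence rigidity invoked in part (1) of the surrounding argument) and that $\Isom_e(G)$ is discrete, so that the identity component is genuinely captured by the continuous translations.

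**Remaining care.**
The subtle point I would watch is ensuring that $L(G)$ is not merely an immersed but a \emph{closed embedded} subgroup equal to the identity component, rather than a dense winding; this is guaranteed because $L(G)$, being the orbit map image of a transitive free action, is all of $\Isom(G)^0$ by the dimension equality, leaving no room for pathology. Once normality is in hand, the three properties together give precisely the internal semidirect product $\Isom(G)=L(G)\rtimes\Isom_e(G)$, completing the proof.
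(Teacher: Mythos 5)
Your proposal is correct and follows essentially the same route as the paper's proof: the trivial intersection and the coset decomposition via $g=\phi(e)$ are verbatim the paper's argument, and normality is obtained, as in the paper, by identifying $L(G)$ with the identity component of $\Isom(G)$ using the dimension count $\dim[\Isom(G)]=3$. You simply spell out in more detail the step the paper compresses into ``by our assumptions on $G$ and dimensionality.''
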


\begin{proof} By our assumptions on $G$ and dimensionality, $ L(G)$ is the identity component of  $\Isom(G)$ hence is a normal subgroup. If $L_g\in L(G)\cap\Isom_e(G)$ then $e=L_g(e)=ge=g$, hence $g=e$. Let $f\in \Isom(G)$ and  $g=f(e)$. Then $L_{g^-1}\circ f\in \Isom_e(G),$ hence $f\in L(G) \Isom_e(G).$
\end{proof}

\begin{lemma}  The map $\Isom_e(G)\to \GL(\g)$, $f\mapsto df_e$,  is {\em (a)} injective, {\em (b)} its image is contained in $\Aut(\g, D_e)$,  the group of Lie algebra automorphisms of $\g$ preserving $D_e$ and its inner product. \end{lemma}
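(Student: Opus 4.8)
The plan is to prove both parts at once by establishing the key structural fact that \emph{every} $f\in\Isom_e(G)$ is a Lie group automorphism of $G$; given this, each assertion of the lemma is a short consequence. The heart of the argument, and the only genuinely non-formal step, is proving that $f$ is a homomorphism. For this I would exploit the normality of $L(G)$ in $\Isom(G)$, already established in the previous lemma. Since $L(G)\trianglelefteq\Isom(G)$, conjugation by $f$ carries left translations to left translations, so there is a well-defined $\psi\colon G\to G$ with $f\circ L_g\circ f^{-1}=L_{\psi(g)}$. Evaluating both sides at $e$ and using $f(e)=e$ (hence $f^{-1}(e)=e$) gives $\psi(g)=f(g)$, so the conjugation identity becomes $f\circ L_g\circ f^{-1}=L_{f(g)}$. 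Applying this to an arbitrary point and substituting $z=f^{-1}(y)$ yields $f(g\cdot z)=f(g)\cdot f(z)$ for all $g,z\in G$; since $f$ is a bijection it is therefore a Lie group automorphism, and in particular $df_e\colon\g\to\g$ is a Lie algebra automorphism.

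With this in hand, part (b) is completed by the two remaining conditions, both immediate from $f$ being a sub-Riemannian isometry. The distribution $D$ is intrinsic to the metric structure (it is recovered from the sub-Riemannian distance, e.g.\ as the directions along which the distance grows linearly), so $f$ preserves $D$; since $f(e)=e$ this gives $df_e(D_e)=D_e$. And because $f$ preserves the inner product on $D$, the restriction $df_e|_{D_e}$ is a linear isometry of $(D_e,\langle\cdot,\cdot\rangle)$. Combined with the automorphism property, this shows $df_e\in\Aut(\g,D_e)$.

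Part (a) then follows formally. A homomorphism of connected Lie groups is determined by its differential at $e$, since that differential determines it on a neighborhood of $e$ via $\exp$, and such a neighborhood generates the connected group $G$. Thus if $f,g\in\Isom_e(G)$ satisfy $df_e=dg_e$, then $g^{-1}\circ f$ is an automorphism with trivial differential, hence the identity, so $f=g$; this is the injectivity claimed in (a). (Alternatively, (a) follows directly from the local rigidity of contact sub-Riemannian $3$-manifolds recorded in part (1): an isometry is determined by its $1$-jet at a point, so an isometry fixing $e$ with $df_e=\mathrm{id}$ is the identity near $e$ and, by connectedness of $G$, everywhere.) The main obstacle is the homomorphism step itself: one must be sure the conjugation $fL_gf^{-1}$ really lands in $L(G)$—which is exactly the normality already proved—and that isometries are smooth diffeomorphisms preserving $(D,\langle\cdot,\cdot\rangle)$, as furnished by the Cartan-equivalence discussion of part (1); once these are in place the computation identifying $\psi$ with $f$ is routine.
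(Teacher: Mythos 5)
Your proof is correct, but it takes a genuinely different route from the paper's. For the bracket preservation in (b), the paper never shows that $f$ is a group automorphism; instead it observes that any isometry acts by Lie algebra automorphisms on the space of Killing fields, that here $\isom(G)=\Rf(G)$ (the right-invariant fields, which generate left translations), and that evaluation at $e$ is a Lie algebra \emph{anti}-isomorphism $\Rf(G)\to\g$, so $df_e$ preserves $-[\cdot,\cdot]$ and hence $[\cdot,\cdot]$. You instead upgrade the normality of $L(G)$ (from the preceding lemma) to the identity $f\circ L_g\circ f^{-1}=L_{f(g)}$, i.e.\ to the statement that every element of $\Isom_e(G)$ is a group automorphism of $G$; this is a stronger structural conclusion from which the infinitesimal one falls out. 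The two arguments are close cousins --- normality of $L(G)$ is also what makes the paper's Killing-field action well defined --- but yours avoids the anti-isomorphism bookkeeping. For (a) the paper appeals to the rigidity of contact sub-Riemannian $3$-manifolds (an isometry is determined by its derivative at one point, via the canonical Riemannian metric produced by the Cartan analysis), which applies to arbitrary isometries; your primary argument (an automorphism of a connected Lie group is determined by $df_e$) is more elementary but leans on first knowing that elements of $\Isom_e(G)$ are automorphisms, and you correctly record the paper's route as a fallback. A small bonus of your approach: since a distance-preserving bijection is continuous and a continuous automorphism of a Lie group is automatically smooth, you get smoothness of $f$ essentially for free, whereas otherwise one must import it from the regularity theory of sub-Riemannian isometries.
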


\begin{proof} (a) An isometry of \sR connected manifolds of contact type is determined by its derivative at a single point (one can deduce it from the existence of a canonical Riemannian metric on such a manifold). Hence $f\mapsto df_e$  is injective. 

\sn (b) An isometry of a \sR manifold $M$ acts on its algebra of Killing vector fields $\isom(M)$ as an automorphism of  Lie algebras. In our case,  $\isom(G)=\mathfrak{R}(G)$ and  the evaluation map $\mathfrak{R}(G)\to \g$ is a Lie algebra anti-isomorphism, hence $df_e$ preserves the negative of the Lie bracket on $\g$, and thus the Lie bracket itself, that is, $df_e\in \Aut(\g)$. Since $f$ is a \sR isometry and fixes $e$, it leaves  $D_e$ invariant, acting on it by isometries. 
\end{proof}

\mn (2) After all these preliminaries, it remains to make some calculations in our case of $G=\SEt$, equipped with a left-invariant \sR structure induced by its action on the  bicycling configuration space $Q$. 

First, to show that such a  \sR structure  is non-flat, we  note that it is of contact type (see Lemma  \ref{lemma:contact}) and that an even stronger statement is known to hold; namely, that the CR structure associated to  such  a \sR structure on $\SEt$ (they are all equivalent) is not flat (the CR structure associated to a \sR structure  is obtained by keeping only the conformal structure on $D$, `forgetting scale'). See for example the calculation in \S7 of \cite{Bor2}. This statement was already known to \'E. Cartan, who classified all homogeneous 3-dimensional CR structures \cite{Cartan}.
We conclude that the group of \sR isometries $\Isom(\SEt)$ is 3-dimensional, where the identity component is generated by left translations of $\SEt$ on itself. Alternatively, one can use the last section of \cite{Hladky} to arrive at the same conclusion.

Next, we fix  a basis of $\set$,  given by the following Killing vector fields on $\R^2$,
$$\partial_x, \ \partial_y, \ \partial_\theta=x\partial_y-y\partial_x,
$$
satisfying 
$$[\partial_x, \partial_y]=0,\ [\partial_\theta,\partial_x]=-\partial_y,\ [\partial_\theta,\partial_y]=\partial_x.
$$

Next   fix  $\q_0=(\b_0,\f_0)\in Q$, where $\b_0=(-1,0),\f_0=(0,0)$ (we assume $\ell=1$, the general case follows easily from this case by a rescaling argument). In the coordinates  $(x,y,\theta)$ of Section \ref{sec:bge}, $\q_0$ is given by  $x_0=y_0=\theta_0=0$.  The actions of  
$\partial_x, \ \partial_y, \ \partial_\theta$ at $\b_0$ are $\partial_x, \partial_y, -\partial_y$, respectively and the no-skid condition at $\q_0$ is $\dot \b\| \partial_x$.  It follows that  $a\partial_x+b\partial_y+c\partial_\theta \in D_e\subset \set$ if and only if $b=c$. Thus $D_e$ is span by $X_1:=\partial_x, \ X_2:=\partial_y+\partial_\theta.$
They act at $\f_0=(0,0)$ by $\partial_x, \partial_y$, respectively, hence they form an \on basis for $D_e$. Let $X_3=[X_1,X_2]=\partial_y.$

\begin{lemma} $\Aut(\set, D_e)=\{id, \varphi_1, \varphi_2, \varphi_1\varphi_2\}=\{id, \varphi_1\}\cdot\{id, \varphi_2\}\simeq \Z_2\times\Z_2$, where $\varphi_1, \varphi_2$ are given in the basis $X_1, X_2, X_3$ by $\diag(1,-1,-1),\   \diag(-1,-1,1),$  respectively.
\end{lemma}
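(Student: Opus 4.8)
The plan is to compute $\Aut(\set, D_e)$ directly by exploiting the rigidity of the structure constants in the chosen basis $X_1, X_2, X_3$. First I would record the brackets in this basis. We have $X_3 = [X_1, X_2]$ by definition, and I must compute the remaining brackets $[X_1, X_3]$ and $[X_2, X_3]$ using the relations $[\partial_x,\partial_y]=0$, $[\partial_\theta,\partial_x]=-\partial_y$, $[\partial_\theta,\partial_y]=\partial_x$. Since $X_1=\partial_x$, $X_2=\partial_y+\partial_\theta$, $X_3=\partial_y$, a short calculation gives $[X_1,X_3]=0$ and $[X_2,X_3]=[\partial_\theta,\partial_y]=\partial_x=X_1$. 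So the only nonzero brackets are $[X_1,X_2]=X_3$ and $[X_2,X_3]=X_1$; in particular $X_3$ is \emph{not} central, which is what distinguishes $\set$ from the Heisenberg algebra and forces the automorphism group to be small.

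Next I would impose the two constraints an element $\varphi\in\Aut(\set,D_e)$ must satisfy. First, preserving $D_e=\mathrm{span}(X_1,X_2)$ together with its inner product means $\varphi|_{D_e}$ is an orthogonal transformation of the Euclidean plane $D_e$ in the orthonormal basis $X_1, X_2$; so $\varphi(X_1), \varphi(X_2)$ is obtained from $X_1, X_2$ by a rotation or a reflection. Second, since $X_3=[X_1,X_2]$ and $\varphi$ is a Lie algebra automorphism, the image $\varphi(X_3)=[\varphi(X_1),\varphi(X_2)]$ is determined by the action on $D_e$; writing the orthogonal map on $D_e$ as having determinant $\pm 1$, bilinearity and antisymmetry of the bracket give $\varphi(X_3)=(\det\varphi|_{D_e})\,X_3$. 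This already pins down the third column of $\varphi$ once the first two are known, and shows $X_3$ is an eigenvector with eigenvalue $\pm 1$.

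The decisive step is then the compatibility condition coming from the \emph{other} nonzero bracket $[X_2,X_3]=X_1$. Applying $\varphi$ gives $[\varphi(X_2),\varphi(X_3)]=\varphi(X_1)$. Writing $\varphi(X_1)=\cos\alpha\,X_1+\sin\alpha\,X_2$, $\varphi(X_2)=\mp\sin\alpha\,X_1\pm\cos\alpha\,X_2$ for a rotation/reflection of angle $\alpha$ and expanding both sides using the bracket table, one finds that consistency forces $\sin\alpha=0$, hence $\alpha\in\{0,\pi\}$, so $\varphi|_{D_e}$ is diagonal, $\diag(\pm 1,\pm 1)$, in the basis $X_1, X_2$. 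Combined with $\varphi(X_3)=(\det)X_3$, this leaves exactly the four candidates $\diag(1,1,1)$, $\diag(1,-1,-1)$, $\diag(-1,1,-1)$, $\diag(-1,-1,1)$. One checks that the two maps $\varphi_1=\diag(1,-1,-1)$ and $\varphi_2=\diag(-1,-1,1)$ indeed preserve both nonzero bracket relations (this is the routine verification), while their product is $\diag(-1,1,-1)$; the remaining sign pattern $\diag(-1,1,1)$ is excluded because it has determinant $+1$ on $D_e$ yet sends $X_3\mapsto -X_3$, violating $\varphi(X_3)=(\det\varphi|_{D_e})X_3$. This yields $\Aut(\set,D_e)=\{id,\varphi_1,\varphi_2,\varphi_1\varphi_2\}\simeq\Z_2\times\Z_2$. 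The main obstacle I anticipate is organizing the bracket bookkeeping cleanly enough that the constraint $\sin\alpha=0$ falls out transparently; once the structure constants are fixed, the rest is a finite check, and the only genuine subtlety is remembering to use the second bracket relation $[X_2,X_3]=X_1$ rather than relying on $[X_1,X_2]=X_3$ alone, since the latter by itself would permit the full orthogonal group on $D_e$.
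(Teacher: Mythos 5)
Your proof is correct and arrives at the same four-element group, but the mechanism by which you force $\varphi|_{D_e}$ to be diagonal is genuinely different from the paper's. The paper argues structurally: the span of $X_1,X_3$ (the translation part $\mathrm{span}(\partial_x,\partial_y)$) is the unique two-dimensional abelian ideal of $\set$, hence is preserved by any automorphism; its intersection with $D_e$ is the $X_1$-axis, which is therefore invariant, and orthogonality of $\varphi|_{D_e}$ immediately gives $\varphi(X_1)=\varepsilon_1X_1$, $\varphi(X_2)=\varepsilon_2X_2$. You instead write out the full bracket table ($[X_1,X_2]=X_3$, $[X_1,X_3]=0$, $[X_2,X_3]=X_1$, all correctly computed), deduce $\varphi(X_3)=(\det\varphi|_{D_e})\,X_3$ from the first relation, and then use the third relation to kill the off-diagonal entries of the orthogonal map ($\sin\alpha=0$). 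Both routes are short and valid; the paper's is more conceptual and bypasses any parametrization of $O(2)$, while yours is entirely computational and self-contained in that it does not require identifying, or proving the uniqueness of, the abelian ideal. One cosmetic slip at the end: once you know $\varphi|_{D_e}$ is diagonal and $\varphi(X_3)=(\det\varphi|_{D_e})X_3$, the third diagonal entry is already determined by the first two, so there is no ``remaining sign pattern'' left to exclude; moreover your stated reason for rejecting $\diag(-1,1,1)$ has the signs reversed (its determinant on $D_e$ is $-1$ while it fixes $X_3$). This does not affect the validity of the argument.
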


\begin{proof}One verifies easily  that $\varphi_1, \varphi_2\in  \Aut(\set, D_e)$ and that they generate  a  group $\{id, \varphi_1, \varphi_2, \varphi_1\varphi_2\}=\{id, \varphi_1\}\cdot\{id, \varphi_2\}\simeq \Z_2\times\Z_2$. It remains to show that any element $\varphi\in\Aut(\set, D_e)$ is in this group. The $X_1X_3$-plane (the linear  span  of $X_1, X_3$) is the only 
2-dimensional abelian ideal in $\set$, hence is $\varphi$-invariant. It follows that the  $X_1$-axis, the intersection of the $X_1X_3$-plane and $D_e$,  is $\varphi$-invariant. Being an isometry of  $D_e$,   $\varphi(X_1)=\varepsilon_1 X_1$, $\varphi(X_2)=\varepsilon_2 X_2$, with $\varepsilon_1,\varepsilon_2\in\{1,-1\}$. Being an automorphism, $\varphi(X_3)=\varphi([X_1, X_2])=[\varphi(X_1), \varphi(X_2)]=\varepsilon_1\varepsilon_2 X_3.$ \end{proof}

Next, we realize $\Aut(\set, D_e)$ by elements of $\Isom_e(\SEt)$. With each element $f\in\Isom(Q)$ is associated an element $\tilde f\in \Isom(\SEt)$ via the identification $\SEt\simeq Q$, $g\mapsto g\q_0.$ For $g\in\SEt\subset\Isom(Q),$ $\tilde g=L_g$. 
 Let $\rho\in\Et$ be reflection about the $x$ axis. Using complex notation, $\rho(\z)=\bar \z$. Let $g_{\u, \w}\in\SEt$, $\z\mapsto \u\z+\w,$ where $\u,\w\in\C$ and $|\u|=1$. 
Then $\rho g_{\u, \z_0}=g_{\bar \u, \bar\z_0}\rho$. Hence $\tilde\rho\cdot g_{\u, \w}=g_{\bar\u, \bar\w}.$ Similarly, $\tilde\Phi\cdot g_{\u, \w}=g_{-\u, \w-2\u}.$

\begin{lemma}$\varphi_1=d\tilde\rho_e,\ \varphi_2=d\tilde f_e,$ 
where $\rho$ is reflection about the $x$ axis,  $f=\rho'\Phi$, and $\rho'$ is the reflexion about the line $x=-1$. \end{lemma}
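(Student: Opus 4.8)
The plan is to identify the two derivatives $d\tilde\rho_e$ and $d\tilde f_e$ inside the four-element group $\Aut(\set, D_e)$ by evaluating them on one-parameter subgroups. First I would check that both isometries fix the base point $\q_0=(\b_0,\f_0)$ with $\b_0=-1,\ \f_0=0$: since $\b_0,\f_0$ lie on the $x$-axis, $\rho$ fixes $\q_0$; and a one-line computation gives that $\Phi(\q_0)$ has back wheel $-1$ and front wheel $-2$, which the reflection $\rho'(\z)=-2-\bar\z$ about the line $x=-1$ returns to $\q_0$, so $f=\rho'\Phi$ fixes $\q_0$ as well. By the preceding lemma it then follows automatically that $d\tilde\rho_e$ and $d\tilde f_e$ lie in $\Aut(\set,D_e)$; the task is only to determine which two of its four elements they are.

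Next I would make the two isometries explicit as maps $\SEt\to\SEt$. The map $\tilde\rho$ is conjugation $g\mapsto\rho g\rho^{-1}$, because $\rho$ normalizes $\SEt$ and fixes $\q_0$, recovering the stated $\tilde\rho(g_{\u,\w})=g_{\bar\u,\bar\w}$. For $\tilde f$ I would use that $F\mapsto\tilde F$ is a homomorphism under composition, combine the given $\tilde\Phi(g_{\u,\w})=g_{-\u,\,\w-2\u}$ with the direct computation $\tilde{\rho'}(g_{\u,\w})=g_{-\bar\u,\,-2-\bar\w}$, and compose to obtain
\[
\tilde f(g_{\u,\w})=g_{\bar\u,\; 2\bar\u-\bar\w-2},
\]
which one checks fixes $e=g_{1,0}$ (the constant $-2$ supplied by $\rho'$ is exactly what cancels the shift produced by $\Phi$).

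The computational core is to evaluate the derivatives on a basis of $\set$. I would realize $\partial_x,\partial_y,\partial_\theta$ as the velocity vectors at $t=0$ of the one-parameter subgroups $g_{1,t}$, $g_{1,it}$, and $g_{e^{it},0}$ respectively -- this is precisely the statement that these subgroups generate the corresponding Killing fields on $\R^2$. Applying $\tilde\rho$ and $\tilde f$ to these three curves and differentiating at $t=0$ yields $d\tilde\rho_e:\ \partial_x\mapsto\partial_x,\ \partial_y\mapsto-\partial_y,\ \partial_\theta\mapsto-\partial_\theta$ and $d\tilde f_e:\ \partial_x\mapsto-\partial_x,\ \partial_y\mapsto\partial_y,\ \partial_\theta\mapsto-\partial_\theta-2\partial_y$. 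Re-expressing these in the basis $X_1=\partial_x,\ X_2=\partial_y+\partial_\theta,\ X_3=\partial_y$ gives $d\tilde\rho_e=\diag(1,-1,-1)=\varphi_1$ and $d\tilde f_e=\diag(-1,-1,1)=\varphi_2$, as claimed.

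I expect the only genuine difficulty to be sign bookkeeping in the last step, in particular the off-diagonal term $-2\partial_y$ produced by $d\tilde f_e(\partial_\theta)$ (coming from differentiating the $2\bar\u$ in $\tilde f$). This term prevents $d\tilde f_e$ from being diagonal in the Killing-field basis, but it cancels exactly upon passing to the $X$-basis, since $X_2$ pairs $\partial_\theta$ with $\partial_y$: indeed $d\tilde f_e(X_2)=\partial_y+(-\partial_\theta-2\partial_y)=-X_2$. Verifying this cancellation, together with getting the convention in the identification $\partial_\theta\leftrightarrow g_{e^{it},0}$ right, is the crux of the argument; everything else is routine differentiation.
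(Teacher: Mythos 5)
Your proof is correct and follows essentially the same route as the paper's (which simply calls the computation a ``routine verification''): check that $\rho$ and $f=\rho'\Phi$ fix $\q_0$, compute the derivatives on $\partial_x,\partial_y,\partial_\theta$ via the explicit formulas for $\tilde\rho$ and $\tilde f$ on $g_{\u,\w}$, and re-express the result in the basis $X_1,X_2,X_3$. One remark: your value $d\tilde f_e(\partial_y)=+\partial_y$ disagrees with the paper's stated $\partial_y\mapsto-\partial_y$, but yours is the correct one --- the paper's line is a typo, since it is inconsistent with its own conclusion $d\tilde f_e=\varphi_2=\diag(-1,-1,1)$, which forces $X_3=\partial_y\mapsto X_3=\partial_y$.
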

 \begin{proof}This is a routine verification. 1st  verify that both $\rho, f$ leave $\q_0$ fixed, so $\tilde\rho, \tilde f\in\Isom_{e}(\SEt)$. Next check that $d\tilde\rho_e:\partial_x\mapsto \partial_x, \ \partial_y\mapsto -\partial_y,\ \partial_\theta\mapsto-\partial_\theta.$ It follows that $d\tilde\rho_e=\varphi_1.$ Next check that 
 $d\tilde f_e:\partial_x\mapsto -\partial_x, \ \partial_y\mapsto -\partial_y,\ \partial_\theta\mapsto-2\partial_y-\partial_\theta.$ It follows that $d\tilde f_e=\varphi_2.$
 \end{proof}
 

 \begin{corollary}Let $\Gamma=\{id, \rho, \Phi, \rho\Phi\}\subset\Isom(Q),$ where $\rho\in \Et\setminus \SEt$ (a reflection about a line). Then $\Gamma=\{id, \rho\}\cdot \{id, \Phi\}\simeq\Z_2\times\Z_2$ and $\Isom(Q)=\SEt\rtimes \Gamma$.
 \end{corollary}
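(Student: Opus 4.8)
The plan is to assemble the preceding lemmas into the asserted product decomposition. The semidirect-product lemma, applied to $G=\SEt$ under the identification $\SEt\simeq Q$, $g\mapsto g\q_0$, gives $\Isom(Q)=\SEt\rtimes\Isom_{\q_0}(Q)$, where $\SEt$ is the (normal) image of left translations and $\Isom_{\q_0}(Q)$ is the stabilizer of $\q_0$. The injectivity of $f\mapsto df_e$ together with the realizations $\varphi_1=d\tilde\rho_e$ and $\varphi_2=d\tilde f_e$ shows that $\Isom_{\q_0}(Q)\to\Aut(\set,D_e)$ is an isomorphism, so $\Isom_{\q_0}(Q)\simeq\Z_2\times\Z_2$ and $[\Isom(Q):\SEt]=4$. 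What remains is to trade the somewhat opaque stabilizer complement $\Isom_{\q_0}(Q)=\{id,\rho,f,\rho f\}$, with $f=\rho'\Phi$, for the geometric group $\Gamma=\{id,\rho,\Phi,\rho\Phi\}$.

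First I would verify that $\Gamma$ is a Klein four-group. Both $\rho$ and $\Phi$ are involutions ($\rho$ a plane reflection, $\Phi$ the flip of Lemma \ref{flip}), and a one-line computation in complex notation shows they commute: $\Phi\rho(\b,\f)=(\bar\b,2\bar\b-\bar\f)=\rho\Phi(\b,\f)$. They are distinct, since $\Phi$ is fixed-point free on $Q$ (the equation $2\b-\f=\f$ forces $\b=\f$, impossible) while $\rho$ fixes every configuration lying over its axis. Hence $\Gamma=\langle\rho,\Phi\rangle=\{id,\rho,\Phi,\rho\Phi\}\simeq\Z_2\times\Z_2$.

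Next I would show that $\Gamma$ is a complement to $\SEt$ by passing to the quotient $\Isom(Q)/\SEt$, a group of order $4$. Since the $\Et$-action embeds $\Et\hookrightarrow\Isom(Q)$ carrying plane-$\SEt$ into $\SEt$, and $\Et/\SEt\simeq\Z_2$ records orientation, the two plane reflections $\rho$ and $\rho'$ lie in a common $\SEt$-coset, i.e. $\bar{\rho'}=\bar\rho$ in the quotient. Consequently $\bar f=\bar{\rho'}\bar\Phi=\bar\rho\bar\Phi$ and $\bar{\rho f}=\bar\Phi$. Because $\Isom_{\q_0}(Q)$ maps isomorphically onto $\Isom(Q)/\SEt$, the classes $\bar{id},\bar\rho,\bar f,\bar{\rho f}$ are distinct and exhaust the quotient; rewritten they are exactly $\bar{id},\bar\rho,\bar\rho\bar\Phi,\bar\Phi$, which is precisely the image of $\Gamma$. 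Thus $\Gamma\to\Isom(Q)/\SEt$ is onto, and since $|\Gamma|=4$ it is an isomorphism. This forces $\Gamma\cap\SEt=\{id\}$ and $\SEt\cdot\Gamma=\Isom(Q)$; with $\SEt$ normal (it is the identity component), we conclude $\Isom(Q)=\SEt\rtimes\Gamma$.

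The only genuinely delicate point is the coset identity $\bar{\rho'}=\bar\rho$, which is exactly what lets us replace the stabilizer element $f=\rho'\Phi$ by $\Phi$ itself. Once one observes that it holds merely because both $\rho$ and $\rho'$ are orientation-reversing, everything else reduces to bookkeeping within the order-$4$ quotient; no further computation with the explicit actions is needed.
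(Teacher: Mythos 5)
Your proposal is correct and follows essentially the same route as the paper's proof: the decisive step in both is that $\rho$ and $\rho'$ differ by the orientation-preserving isometry $\rho\rho'\in\SEt$, so the stabilizer complement $\{id,\rho,\rho'\Phi,\rho\rho'\Phi\}$ and $\Gamma$ represent the same cosets of $\SEt$. Your packaging via the order-$4$ quotient $\Isom(Q)/\SEt$, and the explicit check that $\rho$ and $\Phi$ are distinct commuting involutions, are minor elaborations of what the paper does more tersely.
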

 
 \begin{proof}Clearly, $\SEt\cap \Gamma=\{id\}$, so it remains to show that $\SEt\cdot\Gamma=\Isom(Q)$. We can assume, by conjugating by an element of $\SEt$ that maps the fixed line of $\rho$ to the $x$-axis, that $\rho$ is the reflection about the $x$-axis (the same $\rho$ as in the last lemma).  By  the previous lemmas, $\Isom(Q)=\SEt\cdot\Gamma_0, $ where $\Gamma_0=\Isom_{\q_0}(Q)=\{id, \rho, \rho'\Phi, \rho\rho'\Phi \}.$ Now $\rho\Phi\equiv \rho'\Phi$ and 
 $\Phi\equiv \rho\rho'\Phi $ (mod $\SEt$), hence $\SEt\cdot\Gamma=\SEt\cdot\Gamma_0=\Isom(Q)$.\end{proof}

\end{appendix}

%

\end{document}